\newtheorem{mainthm}{Theorem}
\numberwithin{equation}{section}
\theoremstyle{plain}
\newtheorem{theorem}{Theorem}[section]
\newtheorem{lemma}{Lemma}[section]
\newtheorem{corollary}{Corollary}[section]
\newtheorem{proposition}{Proposition}[section]
\newtheorem{remark}{Remark}[section]
\def\R{\mathbb R}
\begin{document}
\title[On rigidity of hypersurfaces with constant shifted curvature functions]{On rigidity of hypersurfaces with constant shifted curvature functions in hyperbolic space}

\author{Weimin Sheng}
\address{Weimin Sheng: School of Mathematical Sciences, Zhejiang University, Hangzhou 310058, China.}
\email{weimins@zju.edu.cn}

\author{Yinhang Wang}
\address{Yinhang Wang: School of Mathematical Sciences, Zhejiang University, Hangzhou 310058, China.}
\email{22035021@zju.edu.cn}

\author{Jie Wu}
\address{Jie Wu: School of Mathematical Sciences, Zhejiang University, Hangzhou 310058, China.}
\email{wujiewj@zju.edu.cn}

\subjclass[2020]{53C24, 53C42.}
\keywords{Constant shifted curvature, Rigidity, Hyperbolic space, Gauss-Bonnet curvature}

\begin{abstract}
In this paper, we first give some new characterizations of geodesic spheres in the hyperbolic space by the condition that hypersurface has constant weighted shifted mean curvatures, or constant weighted shifted mean curvature ratio, which generalize the result of Hu-Wei-Zhou \cite{HWZ23}. Secondly, we investigate several rigidity problems for hypersurfaces in the hyperbolic space with constant linear combinations of weighted shifted mean curvatures as well as radially symmetric shifted mean curvatures. As applications, we obtain the rigidity results for hypersurfaces with constant linear combinations of mean curvatures in a general form and constant Gauss-Bonnet curvature $L_k$ under weaker conditions, which extend the work of the third author and Xia \cite{WX14}.
\end{abstract}

\maketitle

\baselineskip16pt
\parskip3pt

\section{Introduction}
The rigidity problem of hypersurfaces with constant curvature functions is a fundamental question in differential geometry. A classical theorem due to Alexandrov \cite{A56} states that any closed, embedded hypersurface in Euclidean space with constant mean curvature is a round sphere. Alexandrov's method is based on the maximum principle for elliptic equations and is now referred to as Alexandrov's reflection method. This result is remarkable in that it requires no assumptions about the topology of the hypersurface, which improves previous results due to S\"{u}ss \cite{S52} and Hsiung \cite{H54}. Later, Reilly \cite{R77} provided a new proof of the Alexandrov's theorem by using his famous integral formula. Following the work of Reilly \cite{R77}, Ros \cite{Ros88} generalized Alexandrov's result to the hypersurfaces with constant scalar curvature. Also, by using the classical Alexandrov's reflection method, Korevaar \cite{K88} gave another proof to this result and indicated that Alexandrov's reflection method works as well for hypersurfaces in the hyperbolic space and the hemisphere. Later, Ros \cite{Ros87} extended his result to any constant $k$-mean curvature, which was also proved by Montiel and Ros \cite{MR91} using a direct integral method due to Heintze-Karcher \cite{HK78}. The argument in Montiel and Ros's paper \cite{MR91} also applies to the hyperbolic space and the hemisphere.

After the work of Montiel and Ros, lots of extensions appeared on such rigidity topic. For example, Koh \cite{K98,K00} gave a new characterization of spheres in terms of the ratio of two mean curvatures. Aledo-Al\'{\i}as-Romero \cite{A99} extended the result to compact space-like hypersurfaces with constant higher order mean curvature in de Sitter space. In \cite{HLMG09}, He-Li-Ma-Ge investigated the compact embedded hypersurfaces with constant higher order anisotropic mean curvatures. In \cite{B13}, Brendle showed that Alexandrov Theorem holds in general warped product manifolds, including the (Anti-)deSitter-Schwarzschild manifolds as a typical example. Brendle and Eichmair \cite{BE13} later extended Brendle's result to any closed, star-shaped convex hypersurface with constant higher order mean curvature. For other generalizations, see for instance \cite{ADM06,ADM13,AD14,BC97,HMZ01,LWX14,M99} and references therein.

In a different direction, the curvature quantity with weight $V$ appears naturally. Here $V=\cosh r$, where $r$ is the hyperbolic distance to a fixed point in $\mathbb{H}^{n+1}$. For instance, the weighted mean curvature integral $\int_{\Sigma} V H_1 d \mu$ appears naturally in the definition of the quasi-local mass in $\mathbb{H}^{n+1}$ and the Penrose inequality for asymptotically hyperbolic graphs \cite{MRA13}. The Alexandrov-Fenchel inequalities with weight $V$ also hold in the hyperbolic space. For example, Brendle-Hung-Wang \cite{BHW16} and de Lima-Gir\~{a}o \cite{DG16} proved an Alexandrov-Fenchel-type inequality for the weighted mean curvature integral $\int_{\Sigma} V H_1 d \mu$. Ge, Wang and the third author \cite{GWW15} established an optimal inequality concerning $\int_{\Sigma} V H_k d \mu$. See also \cite{HLW22,SX19}. In \cite{W16}, the third author considered the weighted higher-order mean curvature $V H_k$ and gave a new characterization of geodesic spheres in the hyperbolic space $\mathbb{H}^{n+1}$.
\begin{mainthm}[\cite{W16}]\label{wj}
	Let $\Sigma$ be a closed embedded hypersurface in $\mathbb{H}^{n+1}$. If either of the following conditions holds on $\Sigma$:
	\begin{itemize}
		\item[(i)] $V H_k$ is constant for some $1\leq k\leq n$,
		\item[(ii)] $V \frac{H_k}{H_l}$ is constant for some $0\leq l< k\leq n$ and $H_l$ dose not vanish on $\Sigma$,
	\end{itemize}
	then $\Sigma$ is a centered geodesic sphere.
\end{mainthm}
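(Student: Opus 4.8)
The plan is to reduce both statements to a single umbilicity argument and then pin down the center at the very end. First, note that condition (i) is exactly the special case $l=0$ of condition (ii): since $H_0\equiv 1$, the quantity $V\,H_k/H_0$ is $V H_k$, and the nonvanishing hypothesis on $H_l$ is automatic. Hence it suffices to treat (ii). The whole scheme rests on the static structure of $\mathbb{H}^{n+1}$: the potential $V=\cosh r$ satisfies $\overline{\nabla}^2 V=V\bar g$, so the conformal field $X=\overline{\nabla}V=\sinh r\,\partial_r$ obeys $\overline{\nabla}_Y X=VY$. Restricting to $\Sigma$ with outward unit normal $\nu$, shape operator $h$ (normalized so that centered geodesic spheres are convex), and support function $u=\langle X,\nu\rangle$, one gets the tangential Hessian identity $\nabla^\Sigma_i\nabla^\Sigma_j V = V g_{ij}-u\,h_{ij}$. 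Contracting against the divergence-free Newton tensors $T_{j-1}$ and integrating over the closed $\Sigma$ yields the weighted Hsiung--Minkowski formulas $\int_\Sigma\bigl(V H_{j-1}-u\,H_j\bigr)\,d\mu=0$ for $1\le j\le n$, which are the workhorses of the proof.

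Next I would establish positivity. Let $q\in\Sigma$ realize the maximal distance to the center $p$; the geodesic sphere of radius $r(q)$ centered at $p$ touches $\Sigma$ from outside at $q$, so by comparison every principal curvature of $\Sigma$ at $q$ is at least $\coth r(q)>0$. Thus $H_j(q)>0$ for all $j$, and the constant $c=V\,H_k/H_l$ is strictly positive; since $V>0$, this gives $H_k/H_l>0$ on all of $\Sigma$. Upgrading this to the global ellipticity (cone) condition $\kappa\in\Gamma_k^+$, which is what makes the Newton--MacLaurin inequalities available everywhere, is the first genuinely delicate point. I expect to obtain it by a continuity/connectedness argument that propagates membership in $\Gamma_k^+$ outward from $q$, using that $H_l$ does not vanish on $\Sigma$ and that $H_k/H_l$ has a fixed sign.

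The heart of the argument is to prove that $\Sigma$ is totally umbilic. I would combine the Hsiung--Minkowski formulas of consecutive orders with the Newton--MacLaurin inequalities $H_{j-1}H_{j+1}\le H_j^2$ and $H_{j-1}/H_j\le H_j/H_{j+1}$, in which equality at any index forces all principal curvatures to coincide. Concretely, the constancy $V\,H_k/H_l=c$ feeds the Minkowski identities for the orders between $l+1$ and $k$ into a single sandwich: one direction is supplied by the pointwise Newton--MacLaurin inequalities, and the reverse direction by a weighted Heintze--Karcher-type inequality of the form $\int_\Sigma V/H_1\,d\mu\ge\int_\Sigma u\,d\mu$ (and its higher-order analogues), valid because $\Sigma=\partial\Omega$ is embedded, with equality precisely for centered geodesic spheres. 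Forcing every intermediate inequality to be an equality then yields equality in Newton--MacLaurin, i.e.\ umbilicity. I anticipate that closing this chain is the main obstacle: the weight $V$ enters the Newton--MacLaurin comparisons through fractional powers of $H_k/H_l$, so the naive Montiel--Ros telescoping does not align dimensionally, and one must organize the estimate around the scale-invariant ratio and track the weight carefully. In the weaker-positivity or non-embedded situations one can instead run a purely Minkowski-formula argument in the spirit of Kwong--Lee--Pyo, based on a Cauchy--Schwarz/Newton--MacLaurin inequality whose equality case is again umbilicity.

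Finally, once $\Sigma$ is known to be umbilic it is a geodesic sphere, on which every $H_j$ is constant; the hypothesis $V\,H_k/H_l=c$ then forces $V$, and hence $r$, to be constant along $\Sigma$. Therefore $\Sigma=\{r=\rho\}$ is a geodesic sphere centered at $p$, which is exactly the assertion that $\Sigma$ is a centered geodesic sphere. The two points demanding the most care are the equality discussion in the Newton--MacLaurin/Heintze--Karcher step, to ensure it yields genuine pointwise umbilicity rather than umbilicity off a small set, and the ellipticity propagation used to secure $\kappa\in\Gamma_k^+$ globally.
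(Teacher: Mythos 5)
Your toolkit is the right one---the hyperbolic Minkowski formulas, the elliptic-point argument, G\r{a}rding's connectedness argument for $\kappa\in\Gamma_k^+$, the Newton--MacLaurin inequalities, Brendle's Heintze--Karcher inequality, and the final ``umbilic $+$ constancy forces $V$ constant'' step all appear in the actual proof (the paper quotes Theorem \ref{wj} from \cite{W16}, but its proof of the generalization, Theorem \ref{thm1.1} via Lemma \ref{wmkl}, follows the same template with $\chi(V)$ in place of $V$ and shifted curvatures). However, there is a genuine gap exactly where you flag ``the main obstacle'': you never supply the mechanism that reconciles the weight $V$ with the Montiel--Ros sandwich. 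The identities you call weighted Hsiung--Minkowski formulas, $\int_\Sigma(VH_{j-1}-uH_j)\,d\mu=0$, are the \emph{unweighted} hyperbolic Minkowski formulas; feeding $VH_k=c$ into them yields $\int_\Sigma VH_{k-1}\,d\mu=c\int_\Sigma (u/V)\,d\mu$, whereas Heintze--Karcher combined with (\ref{nm2}) bounds $\int_\Sigma u\,d\mu\le c^{-1/k}\int_\Sigma V^{1+1/k}\,d\mu$; since $V$ is not constant a priori, these two statements involve different integrands and cannot be played against each other, which is precisely the ``dimensional misalignment'' you noticed. What is missing is the Minkowski formula with weight and a \emph{signed} correction term, i.e.\ the unshifted analogue of Lemma \ref{wmkl}: multiplying $(T_{k-1})_i^j(h)\nabla^i\nabla_j V=kC_n^k\bigl(VH_{k-1}-uH_k\bigr)$ by $V$ and integrating by parts against the divergence-free Newton tensor gives
\begin{equation*}
\int_\Sigma V\bigl(VH_{k-1}-uH_k\bigr)\,d\mu=-\frac{1}{kC_n^k}\int_\Sigma (T_{k-1})_i^j(h)\,\nabla^iV\,\nabla_jV\,d\mu\le 0,
\end{equation*}
the sign coming from $T_{k-1}\ge0$ on $\overline{\Gamma_k^{+}}$. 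This one-sided inequality is the whole point of the method: it is what makes the weight compatible with (\ref{nm2}) on one side and with Proposition \ref{hkp} (respectively Brendle's inequality in the unshifted setting) on the other.

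Your reduction also goes in the wrong direction. Folding (i) into (ii) as the case $l=0$ leaves you holding the hardest case with no way to close it, since Heintze--Karcher compares only against $H_1$ (the ``higher-order analogues'' you invoke do not exist in the needed form; higher orders are reached from $H_1$ via (\ref{nm2})). The paper does the opposite: case (i) is closed by the sandwich just described, and case (ii) is \emph{reduced} to case (i) by an iteration. Concretely, from $VH_k/H_l=c$ and (\ref{nm1}) one gets the pointwise bound $VH_{k-1}/H_{l-1}\ge c$, while the weighted formula above (index $k$) combined with the unweighted formula (\ref{mkf}) (index $l$) gives $\int_\Sigma V\bigl(VH_{k-1}-cH_{l-1}\bigr)\,d\mu\le0$; since $V>0$, equality holds pointwise, so $VH_{k-1}/H_{l-1}=c$, and descending $l$ steps yields $VH_{k-l}=c$, i.e.\ case (i). Your plan of a ``single sandwich'' for general $(k,l)$, with the equality discussion left open, is exactly the step this iteration is designed to avoid; as written, the central step of your proposal is an acknowledged conjecture rather than a proof, even though the surrounding scaffolding (positivity at the far point, G\r{a}rding propagation, and the concluding centering argument) is sound and agrees with the paper.
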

The proof of Theorem \ref{wj} used the similar argument of Montiel and Ros as in \cite{MR91,Ros87}, and the Heintze-Karcher-type inequality for mean convex hypersurface in $\mathbb{H}^{n+1}$ (due to Brendle) also plays an important role. Recently, Hu-Wei-Zhou \cite{HWZ23} established a new Heintze-Karcher-type inequality for hypersurfaces with mean curvature $H>n$ in the hyperbolic space. See (\ref{hkf}) for the precise statement. As applications, they obtained an Alexandrov type theorem for closed embedded hypersurfaces with constant shifted $k$th mean curvature in hyperbolic space. For this, we recall the definition of shifted $k$th mean curvature. Let $\Sigma$ be a smooth hypersurface in $\mathbb{H}^{n+1}$, its shifted principal curvatures are defined by $\tilde{\kappa}=(\tilde{\kappa}_1, \cdots, \tilde{\kappa}_n)=(\kappa_1 -1, \cdots, \kappa_n -1)$,  where $\kappa=(\kappa_1, \cdots, \kappa_n)$ are the principal curvatures of $\Sigma$. Then the shifted $k$th mean curvature $H_k(\tilde{\kappa})$ of $\Sigma$ is given by the
the normalized $k$th elementary symmetric function of  $\tilde{\kappa}$. These definitions arise quite naturally in the setting of horospherically convex (h-convex) geometry of hypersurfaces in $\mathbb{H}^{n+1}$ (see \cite{BCW21,EGL09} for instance).
\begin{mainthm}[\cite{HWZ23}]\label{hwz}
	Let $\Sigma$ be a closed embedded hypersurface in $\mathbb{H}^{n+1}$. If the shifted $k$th mean curvature $H_k(\tilde{\kappa})$ is constant for some $k \in \{1,\cdots,n \}$, then $\Sigma$ is a geodesic sphere.
\end{mainthm}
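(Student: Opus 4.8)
The plan is to run a shifted analogue of the Montiel--Ros argument, with the new Heintze--Karcher-type inequality (\ref{hkf}) playing the role that Brendle's inequality plays in the unshifted case. Write $A$ for the shape operator of $\Sigma$, $\tilde A=A-\mathrm{Id}$ for the shifted shape operator (so its eigenvalues are $\tilde\kappa_i=\kappa_i-1$), and let $V=\cosh r$, $X=\sinh r\,\partial_r$, $u=\langle X,\nu\rangle$. The conformal Killing field $X$ satisfies $\bar\nabla_Y X=V\,Y$, so along $\Sigma$ one has $\nabla_{e_i}X^\top=V\,e_i-u\,A(e_i)$.

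First I would settle the cone membership, which is the only genuinely delicate preliminary step. Since $\Sigma$ is closed it lies in some geodesic ball and touches its boundary from inside at a farthest point; there every $\kappa_i$ is at least the curvature of the ball, which exceeds $1$, so $\tilde\kappa_i>0$ and in particular $H_k(\tilde\kappa)>0$ at that point. Hence the constant value $c:=H_k(\tilde\kappa)$ is positive. The set $U=\{p\in\Sigma:\tilde\kappa(p)\in\Gamma_k\}$, with $\Gamma_k$ the G\aa rding cone of $\sigma_k$, is then open, nonempty, and closed (its boundary would lie in $\{H_k(\tilde\kappa)=0\}$, impossible since $H_k(\tilde\kappa)\equiv c>0$), so connectedness of $\Sigma$ forces $U=\Sigma$. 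Consequently $H_j(\tilde\kappa)>0$ on $\Sigma$ for all $1\le j\le k$; in particular $H_1(\tilde\kappa)>0$, i.e. $H>n$, so that (\ref{hkf}) applies.

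Next I would record the shifted Minkowski formula. Because $\mathrm{Id}$ is Codazzi, the shifted Newton tensors $\tilde T_{k-1}$ built from $\tilde A$ are divergence-free, exactly as the ordinary ones are. Contracting with $X^\top$, using $\nabla_{e_i}X^\top=V e_i-uA(e_i)$ and $A=\tilde A+\mathrm{Id}$, and integrating gives
\[
\int_\Sigma (V-u)\,H_{k-1}(\tilde\kappa)\,d\mu=\int_\Sigma u\,H_k(\tilde\kappa)\,d\mu .
\]
Since $V-u=\cosh r-\sinh r\,\langle\partial_r,\nu\rangle\ge\cosh r-\sinh r>0$, this weight is strictly positive, a fact the final estimate will require.

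Finally I would close the Montiel--Ros loop. The Newton--MacLaurin inequalities, valid on $\Sigma$ by the cone membership above, give $H_k(\tilde\kappa)/H_{k-1}(\tilde\kappa)\le H_1(\tilde\kappa)$, i.e. $1/H_1(\tilde\kappa)\le H_{k-1}(\tilde\kappa)/H_k(\tilde\kappa)$. Multiplying by $V-u>0$, integrating, and using $H_k(\tilde\kappa)\equiv c$ together with the Minkowski formula yields
\[
\int_\Sigma\frac{V-u}{H_1(\tilde\kappa)}\,d\mu\le\frac1c\int_\Sigma (V-u)\,H_{k-1}(\tilde\kappa)\,d\mu=\int_\Sigma u\,d\mu .
\]
On the other hand, the Heintze--Karcher-type inequality (\ref{hkf}) bounds the same integral from below by $\int_\Sigma u\,d\mu$ (equivalently by $(n+1)\int_\Omega V$, via $\mathrm{div}_{\mathbb H}X=(n+1)V$), with equality only for geodesic spheres. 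The two estimates squeeze to equality, which forces $\Sigma$ to be totally umbilic, hence a geodesic sphere. The main obstacle, beyond invoking (\ref{hkf}), is the cone-membership step and the verification that the weight in the Minkowski formula matches the one in (\ref{hkf}), so that the chain indeed closes at equality.
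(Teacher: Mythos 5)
Your proposal is correct and follows essentially the same route as the paper's proof (which establishes the more general weighted Theorem \ref{thm1.1}(i) and recovers Theorem \ref{hwz} by taking $\chi\equiv 1$): an elliptic point plus G\r{a}rding's connectedness argument to get $\tilde\kappa\in\Gamma_k^{+}$, the shifted Minkowski formula (\ref{mkf}) from the divergence-free shifted Newton tensor, a Newton--MacLaurin inequality, and the squeeze against the Heintze--Karcher inequality (\ref{hkf}). The only cosmetic difference is that you use the quotient form (\ref{nm1}), $H_k(\tilde\kappa)/H_{k-1}(\tilde\kappa)\le H_1(\tilde\kappa)$, where the paper uses the power form (\ref{nm2}), $H_{k-1}(\tilde\kappa)\ge H_k(\tilde\kappa)^{\frac{k-1}{k}}$; both close the same Montiel--Ros loop.
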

Firstly, we give some generalizations of the above result for closed hypersurfaces in the hyperbolic space. We prove the following theorem.
\begin{theorem}\label{thm1.1}
	Assume that $\chi(s)$ is a smooth, positive and monotone non-decreasing function defined on $\mathbb{R}^{+}$. Let $\Sigma$ be a closed embedded hypersurface in $\mathbb{H}^{n+1}$. If either of the following conditions holds on $\Sigma$:
	\begin{itemize}
		\item[(i)] $\chi(V) H_k(\tilde{\kappa})$ is constant for some $1\leq k\leq n$,
		\item[(ii)] $\chi(V) \frac{H_k(\tilde{\kappa})}{H_l(\tilde{\kappa})}$ is constant for some $0\leq l< k\leq n$ and $H_l(\tilde{\kappa})$ dose not vanish on $\Sigma$,
	\end{itemize}
	then $\Sigma$ is a geodesic sphere. Moreover, if $\chi$ is strictly increasing, then $\Sigma$ is a centered geodesic sphere.
\end{theorem}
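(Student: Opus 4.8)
The plan is to adapt the Heintze--Karcher/Montiel--Ros scheme to the shifted curvatures and the extra weight $\chi(V)$, the whole point being to reduce both (i) and (ii) to the statement that $\Sigma$ is totally umbilic (a closed embedded totally umbilic hypersurface in $\mathbb H^{n+1}$ being a geodesic sphere). I write $u=\langle\bar\nabla V,\nu\rangle$ for the support function and $\tilde T_{0},\dots,\tilde T_{n}$ for the Newton tensors built from the shifted shape operator (eigenvalues $\tilde\kappa_i=\kappa_i-1$), and I use the two structural identities for $V=\cosh r$ in $\mathbb H^{n+1}$: the ambient Hessian $\bar\nabla^2V=V\bar g$, which restricts on $\Sigma$ to $\nabla^2_{ij}V=Vg_{ij}-u\,h_{ij}$, and $\bar\Delta V=(n+1)V$. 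Note that $V-u\ge e^{-r}>0$ everywhere, and that the $\tilde T_j$ are divergence free, since the shifted second fundamental form still satisfies the Codazzi equations in a space form.

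First I would settle positivity and ellipticity. At a point where the distance $r$ to the center is maximal, $\Sigma$ lies inside the geodesic sphere of that radius, so $\kappa_i\ge\coth r_{\max}>1$ and hence $\tilde\kappa$ lies in the positive cone there; in particular $H_k(\tilde\kappa)>0$ (and $H_l(\tilde\kappa)>0$) at that point. Since $\chi>0$, the constancy hypothesis forces the constant to be positive, so $H_k(\tilde\kappa)>0$ on all of $\Sigma$ (in case (ii), $H_l(\tilde\kappa)>0$ everywhere as well, using $H_l\neq0$ and connectedness). By G\aa rding's theory, a positive $H_k(\tilde\kappa)$ together with one elliptic point gives $\tilde\kappa\in\Gamma_k$ on all of $\Sigma$; consequently $H_1(\tilde\kappa)>0$ (so $H>n$ and the Heintze--Karcher-type inequality \eqref{hkf} of \cite{HWZ23} applies), the Newton--Maclaurin inequalities are available, and $\tilde T_0,\dots,\tilde T_{k-1}$ are positive semi-definite.

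Next I would record the Minkowski-type formulas and close the loop for case (i). Contracting $\nabla^2 V$ with $\tilde T_{k-1}$ and using $\nabla^2_{ij}V=Vg_{ij}-u\,h_{ij}$ together with $h=\tilde h+g$ gives $\mathrm{div}(\tilde T_{k-1}\nabla V)=c_{n,k}\bigl[(V-u)H_{k-1}(\tilde\kappa)-u\,H_k(\tilde\kappa)\bigr]$ for a positive constant $c_{n,k}$, whence the shifted Minkowski formula $\int_\Sigma(V-u)H_{k-1}\,d\mu=\int_\Sigma u\,H_k\,d\mu$. Inserting the weight through $\mathrm{div}(\chi(V)\tilde T_{k-1}\nabla V)=\chi'(V)\tilde T_{k-1}(\nabla V,\nabla V)+\chi(V)\,\mathrm{div}(\tilde T_{k-1}\nabla V)$ and using $\chi'\ge0$ with $\tilde T_{k-1}$ positive semi-definite yields the one-sided weighted formula $\int_\Sigma\chi(V)(V-u)H_{k-1}\,d\mu\le\int_\Sigma\chi(V)u\,H_k\,d\mu$, with equality iff $\chi'(V)\tilde T_{k-1}(\nabla V,\nabla V)\equiv0$; and the divergence theorem with $\bar\Delta V=(n+1)V$ gives $\int_\Sigma u\,d\mu=(n+1)\int_\Omega V$. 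With $\chi(V)H_k(\tilde\kappa)=c$ these pieces close into the chain
\begin{align*}
\int_\Sigma u\,d\mu &\le \int_\Sigma\frac{V-u}{H_1(\tilde\kappa)}\,d\mu \le \int_\Sigma\frac{(V-u)H_{k-1}(\tilde\kappa)}{H_k(\tilde\kappa)}\,d\mu\\
&= \frac1c\int_\Sigma\chi(V)(V-u)H_{k-1}(\tilde\kappa)\,d\mu \le \frac1c\int_\Sigma\chi(V)u\,H_k(\tilde\kappa)\,d\mu = \int_\Sigma u\,d\mu,
\end{align*}
where the first step is \eqref{hkf}, the second is the Newton--Maclaurin inequality $H_k\le H_1H_{k-1}$, and the fourth is the weighted Minkowski inequality. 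The two ends agree, so every inequality is an equality; equality in \eqref{hkf} forces $\Sigma$ to be totally umbilic, i.e. a geodesic sphere.

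The delicate case is (ii), where $\chi(V)H_k(\tilde\kappa)=c\,H_l(\tilde\kappa)$. The same start gives $\int_\Sigma u\,d\mu\le\frac1c\int_\Sigma\chi(V)(V-u)\frac{H_{k-1}(\tilde\kappa)}{H_l(\tilde\kappa)}\,d\mu$, and closing the loop now amounts to showing $\int_\Sigma\frac{\chi(V)}{H_l}\,\mathrm{div}(\tilde T_{k-1}\nabla V)\,d\mu\le0$. Integrating by parts produces the good term $-\int_\Sigma\frac{\chi'(V)}{H_l}\tilde T_{k-1}(\nabla V,\nabla V)\,d\mu\le0$ but also an indefinite term $\int_\Sigma\frac{\chi(V)}{H_l^2}\langle\nabla H_l,\tilde T_{k-1}\nabla V\rangle\,d\mu$. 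This $\nabla H_l$-term --- a reflection of the facts that $H_l$ is not a function of $V$ alone and that $l,k$ need not be adjacent --- is the main obstacle; I expect to dispose of it by telescoping the shifted Minkowski formulas through the intermediate levels $l+1,\dots,k$ and invoking the full monotonicity of the quotients $H_{j}/H_{j-1}$, so that the chain is again squeezed between two copies of a single integral, forcing umbilicity. Finally, for the centered conclusion: once $\Sigma$ is a geodesic sphere, $H_k(\tilde\kappa)$ (resp. $H_k/H_l$) is constant on it, so the hypothesis makes $\chi(V)$ constant along $\Sigma$; when $\chi$ is strictly increasing this forces $V$, hence $r$, to be constant on $\Sigma$, so $\Sigma$ is a level set of $\cosh r$, i.e. a geodesic sphere centered at the reference point.
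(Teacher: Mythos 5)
Your treatment of case (i) is correct and is essentially the paper's argument: both squeeze $\int_\Sigma u\,d\mu$ between the Hu--Wei--Zhou inequality and the weighted Minkowski inequality, differing only in which Newton--Maclaurin inequality closes the chain (you use $H_k\le H_1H_{k-1}$ and the equality case of the Heintze--Karcher inequality; the paper uses $H_{k-1}\ge H_k^{(k-1)/k}$, $H_1\ge H_k^{1/k}$ and the equality case of Newton--Maclaurin). The elliptic-point/G\r{a}rding step, the weighted Minkowski inequality with the $\chi'(V)\tilde T_{k-1}(\nabla V,\nabla V)$ correction term, and the final argument that strict monotonicity of $\chi$ forces $V$ to be constant all match the paper.

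Case (ii), however, has a genuine gap. You correctly identify that dividing by $H_l$ inside the integration by parts produces the indefinite term $\int_\Sigma\frac{\chi(V)}{H_l^2}\langle\nabla H_l,\tilde T_{k-1}\nabla V\rangle\,d\mu$, but your proposed remedy (``telescoping the shifted Minkowski formulas through the intermediate levels $l+1,\dots,k$'') is only a hope, not an argument: nothing you write controls the sign of that term, and there is no reason it has one. The paper's proof of (ii) avoids this entirely by never putting $1/H_l$ under a divergence and, in fact, never invoking the Heintze--Karcher inequality in this case. Instead it runs a simultaneous descent in both indices. Writing $c=\chi(V)H_k(\tilde\kappa)/H_l(\tilde\kappa)$, the Newton--Maclaurin inequality $H_k/H_{k-1}\le H_l/H_{l-1}$ gives the pointwise bound $\chi(V)H_{k-1}(\tilde\kappa)/H_{l-1}(\tilde\kappa)\ge c$. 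On the other hand, the weighted Minkowski inequality at level $k$, the hypothesis (which converts $\int_\Sigma\chi(V)uH_k\,d\mu$ into $c\int_\Sigma uH_l\,d\mu$), and the plain Minkowski formula at level $l$ combine to give $\int_\Sigma(V-u)\bigl(\chi(V)H_{k-1}(\tilde\kappa)-cH_{l-1}(\tilde\kappa)\bigr)d\mu\le 0$. Since $V-u>0$ and the integrand is pointwise nonnegative, it vanishes identically, i.e. $\chi(V)H_{k-1}/H_{l-1}\equiv c$ on $\Sigma$. Iterating this step lowers both indices by one at a time until the lower index reaches $0$, yielding $\chi(V)H_{k-l}(\tilde\kappa)\equiv c$, which is exactly case (i). You have all the ingredients in hand (the monotonicity of the quotients and both Minkowski formulas), but the mechanism that closes case (ii) is this descent forced by a sign-definite integrand, not a telescoping of integral identities; as written, your proof of (ii) is incomplete.
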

	If $\chi=1$, case $(i)$ of Theorem \ref{thm1.1} reduces to Hu-Wei-Zhou's result \cite{HWZ23}.
Our second result is the rigidity for hypersurfaces with constant linear combinations of weighted shifted mean curvatures in $\mathbb{H}^{n+1}$.
\begin{theorem}\label{thm1.3}
	Let $n\geq2$. Assume that $\chi(s)$ is a smooth, positive and monotone non-decreasing function defined on $\mathbb{R}^{+}$. Let $0\leq k\leq n$ be an integer and $\Sigma$ be a closed hypersurface in $\mathbb{H}^{n+1}$ with $\tilde{\kappa}\in\overline{\Gamma_{k}^+}$. If one of the following holds:
	\begin{itemize}
		\item[(i)] $2\leq l<k\leq n$ and there are nonnegative constants $\{a_i\}_{i=1}^{l-1}$ and $\{b_j\}_{j=l}^{k}$, at least one of them not vanishing, such that
		\begin{equation*}
			\sum_{i=1}^{l-1}a_i H_i (\tilde{\kappa})=\sum_{j=l}^{k}b_j (\chi(V)H_j (\tilde{\kappa}));
		\end{equation*}
		\item[(ii)] there are nonnegative constants $a_0$ and $\{b_j\}_{j=1}^{k}$, at least one of them not vanishing, such that
		\begin{equation*}
			a_0 =\sum_{j=1}^{k}b_j (\chi(V)H_j (\tilde{\kappa}));
		\end{equation*}
		\item[(iii)] $\Sigma$ is star-shaped, $1\leq l<k\leq n-1$ and there are nonnegative constants $\{a_i\}_{i=0}^{l-1}$ and $\{b_j\}_{j=l}^{k}$, at least one of them not vanishing, such that
		\begin{equation*}
			\sum_{i=0}^{l-1}a_i H_i (\tilde{\kappa})=\sum_{j=l}^{k}b_j (\chi(V)H_j (\tilde{\kappa})),
		\end{equation*}
	\end{itemize}
	then $\Sigma$ is a geodesic sphere.
\end{theorem}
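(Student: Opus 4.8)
The plan is to run an integral\emph{-}pinching argument assembled from Minkowski-type formulas for the shifted curvatures, their $\chi$-weighted counterparts, the Newton--MacLaurin inequalities, and the shifted Heintze--Karcher inequality established in \cite{HWZ23}, the equality case of which will force $\Sigma$ to be totally umbilic. This is the same circle of ideas used for Theorem \ref{thm1.1}, adapted so that a \emph{linear combination} of the $H_j(\tilde\kappa)$, rather than a single one, drives the rigidity.

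First I would fix the point defining $r$, set $V=\cosh r$, $X=\sinh r\,\partial_r$ and $u=\langle X,\nu\rangle$, and record the two structural facts $\nabla^\Sigma_i X_j=(V-u)g_{ij}-u\,\tilde h_{ij}$, obtained by writing the second fundamental form as $h=\tilde h+g$ so that $\tilde h$ has eigenvalues $\tilde\kappa$, and $\nabla^\Sigma V=X^\top$. Integrating the divergence of the divergence-free field $\tilde P_m^{ij}X_j$, where $\tilde P_m$ is the $m$-th Newton tensor of $\tilde h$, yields the shifted Minkowski identities
\begin{equation*}
\int_\Sigma\bigl[(V-u)H_m(\tilde\kappa)-u\,H_{m+1}(\tilde\kappa)\bigr]\,d\mu=0,\qquad 0\le m\le n-1,
\end{equation*}
while integrating the divergence of $\chi(V)\tilde P_m^{ij}X_j$ produces the weighted versions
\begin{equation*}
\int_\Sigma\chi(V)\bigl[(V-u)H_m(\tilde\kappa)-u\,H_{m+1}(\tilde\kappa)\bigr]\,d\mu
=-\tfrac{1}{(m+1)\binom{n}{m+1}}\int_\Sigma\chi'(V)\,\tilde P_m(X^\top,X^\top)\,d\mu\le 0 ,
\end{equation*}
where the sign uses $\chi'\ge 0$ together with $\tilde P_m\ge 0$, valid for $m\le k-1$ since $\tilde\kappa\in\overline{\Gamma_k^+}$. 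For $\tilde\kappa\in\overline{\Gamma_k^+}$ I also have $H_j(\tilde\kappa)\ge 0$ for $j\le k$ and the full Newton--MacLaurin chain, in particular the monotonicity of $H_r(\tilde\kappa)/H_{r-1}(\tilde\kappa)$ in $r$.

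The core step is to feed the pointwise hypothesis $\sum_i a_iH_i(\tilde\kappa)=\chi(V)\sum_j b_jH_j(\tilde\kappa)$ into these relations. Multiplying the hypothesis by $u$ (respectively by $V-u$) and integrating, I would rewrite the unweighted lower-order side by the exact Minkowski identities and bound the weighted higher-order side by the weighted inequalities, then compress the resulting chain by Newton--MacLaurin to compare the various $H_j(\tilde\kappa)$; the shifted Heintze--Karcher inequality supplies the single reverse estimate needed to close the loop. (Already in the model case $k=1$ of case (ii), where $\chi(V)H_1(\tilde\kappa)$ is a constant, combining the one weighted Minkowski inequality with Heintze--Karcher pinches both to equality.) Since every intermediate inequality is thereby forced to be an equality, the Newton--MacLaurin equality case gives $\tilde\kappa_1=\cdots=\tilde\kappa_n$, i.e. $\Sigma$ is totally umbilic, and a closed totally umbilic hypersurface in $\mathbb H^{n+1}$ is a geodesic sphere.

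The three cases differ only in how this loop is closed, which is where the side hypotheses enter. When the constant term $H_0(\tilde\kappa)=1$ is absent and $l\ge 2$ (case (i)), the chain involves only $H_1(\tilde\kappa),\dots,H_k(\tilde\kappa)$ and can be closed by Minkowski and Newton--MacLaurin alone, so no positivity of $u$ is required and $k$ may reach $n$; when the left-hand side is the bare constant $a_0$ (case (ii)), Heintze--Karcher closes it directly; and when $H_0(\tilde\kappa)$ appears alongside genuine higher-order terms (case (iii)), one must invoke Heintze--Karcher and use $u>0$, which star-shapedness guarantees, while the range $k\le n-1$ leaves the room in the Newton--MacLaurin chain needed for the pinching. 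I expect the main obstacle to be exactly the bookkeeping of this closing step: arranging the weighted Minkowski inequalities so that they point the correct way, handling the top-order term $H_k(\tilde\kappa)$ where $\tilde P_k$ need not be nonnegative (only $\overline{\Gamma_k^+}$ is assumed), and carrying out the equality analysis for a merely non-decreasing $\chi$, for which the error term $\int_\Sigma\chi'(V)\tilde P_m(X^\top,X^\top)\,d\mu$ may vanish without $V$ being constant, so that umbilicity must be extracted from the Newton--MacLaurin equality rather than from the weight.
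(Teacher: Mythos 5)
Your setup and your handling of cases (i) and (ii) are essentially the paper's proof: your shifted Minkowski identities are (\ref{mkf}), your weighted versions with the $\chi'(V)\,\tilde P_m(X^\top,X^\top)$ error term are exactly (\ref{wmk})--(\ref{wmki}), case (i) is indeed closed by Minkowski plus Newton--MacLaurin alone, and case (ii) by the $1/H_1(\tilde\kappa)$ compression against Proposition \ref{hkp}. What you leave vague, but is the actual engine of case (i), should be spelled out: the ``compression'' is not a term-by-term estimate but a product trick --- multiply $H_i(\tilde\kappa)H_{j-1}(\tilde\kappa)\geq H_{i-1}(\tilde\kappa)H_j(\tilde\kappa)$ by $a_i b_j\chi(V)$, sum over $i,j$, and cancel the common positive value of the two sides of the hypothesis to obtain the pointwise bound (\ref{snmij1}), $\sum_j b_j\chi(V)H_{j-1}(\tilde\kappa)\geq\sum_i a_i H_{i-1}(\tilde\kappa)$; only with this pointwise inequality does your integral chain pinch.

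The genuine gap is case (iii), which you claim ``must'' be closed by Heintze--Karcher together with $u>0$. The paper does not use Heintze--Karcher there at all: it multiplies the hypothesis by $V-u$, converts the $a_i$-side with the ascending identities $\int_\Sigma(V-u)H_i\,d\mu=\int_\Sigma u\,H_{i+1}\,d\mu$ and the $b_j$-side with the ascending weighted inequalities, and closes with the ascending pointwise bound (\ref{snmij111}), $\sum_i a_i H_{i+1}(\tilde\kappa)\geq\sum_j b_j\chi(V)H_{j+1}(\tilde\kappa)$, again obtained by the product trick; star-shapedness enters only through the sign of $u$ in the final integrand, and $k\leq n-1$ only so that $H_{k+1}$ exists. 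Your descending-plus-Heintze--Karcher scheme, as described, does not close: after multiplying by $u$, applying (\ref{mkf}) and (\ref{wmki}), and compressing by $1/H_1(\tilde\kappa)$, the lower-order terms produce $H_i(\tilde\kappa)/H_1(\tilde\kappa)\leq H_{i-1}(\tilde\kappa)$ --- an inequality pointing the wrong way --- so the $\sum_{i\geq1}a_i$ terms cannot be cancelled and you cannot isolate $a_0\int_\Sigma u\,d\mu\geq a_0\int_\Sigma (V-u)/H_1(\tilde\kappa)\,d\mu$ whenever some $a_i\neq 0$ with $i\geq1$. (A Heintze--Karcher proof of (iii) can in fact be rescued, but only by compressing with the pivot ratio at index $l$, namely the pointwise bound $\sum_j b_j\chi(V)H_{j-1}-\sum_{i\geq1}a_i H_{i-1}\geq a_0\,H_{l-1}(\tilde\kappa)/H_l(\tilde\kappa)\geq a_0/H_1(\tilde\kappa)$, treating $a_0=0$ separately as case (i); this step is absent from your sketch, and that route uses neither $u\geq0$ nor $k\leq n-1$, contrary to the roles you assign them. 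Incidentally, your worry that $\tilde P_k$ need not be nonnegative under $\overline{\Gamma_{k}^+}$ is pertinent --- but to the ascending route, where $T_k(\tilde h)$ appears at $j=k$, not to the descending inequalities your plan actually uses.)
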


\begin{remark}
	Theorem \ref{thm1.3} contains the case that the weighted shifted mean curvature ratio $\chi(V)\frac{H_{k}(\tilde{\kappa})}{H_{l}(\tilde{\kappa})}$ are constant for $k>l$. We notice that the condition of $\tilde{\kappa}\in\overline{\Gamma_{k}^+}$ is superfluous in this case since it is implied by the constancy of $\chi(V)\frac{H_{k}(\tilde{\kappa})}{H_{l}(\tilde{\kappa})}$. Furthermore, we also prove a similar rigidity result with weight $\chi(V-u)$ for h-convex hypersurfaces. See Theorem \ref{cvu} below.
\end{remark}
Theorem \ref{thm1.3} will be proved by using the classical integral method due to Hsiung \cite{H54} and Reilly \cite{R77}. The main tools are a family of Newton-Maclaurin inequalities as well as a New Minkowski type formula, Lemma \ref{wmkl}.

Following the idea of Theorem \ref{thm1.3}, we investigate the following general form of rigidity result for linear combinations of shifted higher order mean curvatures.
\begin{theorem}\label{thm+1}
	Let $n\geq2$. Let $0\leq k\leq n$ be an integer and $\Sigma$ be a closed hypersurface in $\mathbb{H}^{n+1}$ with $\tilde{\kappa}\in\overline{\Gamma_{k}^+}$. Let $r$ be the distance in $\mathbb{H}^{n+1}$ from a fixed point $p_{0}$. If one of the following holds:
	\begin{itemize}
		\item[(i)] $2\leq l<k\leq n$ and there are two families of nonnegative smooth functions $\{a_i (r)\}_{i=1}^{l-1}$ and $\{b_j (r)\}_{j=l}^{k}$, which are monotone decreasing  and monotone increasing respectively, at least one of them not vanishing, such that
		\begin{equation*}
			\sum_{i=1}^{l-1}a_i (r) H_i (\tilde{\kappa})=\sum_{j=l}^{k}b_j (r) H_j (\tilde{\kappa});
		\end{equation*}
		\item[(ii)] there are nonnegative smooth monotone decreasing functions $a_0 (r)$ and monotone increasing functions $\{b_j (r)\}_{j=1}^{k}$, at least one of them not vanishing, such that
		\begin{equation}\label{a0r}
			a_0 (r)=\sum_{j=1}^{k}b_j (r) H_j (\tilde{\kappa});
		\end{equation}
		\item[(iii)] $\Sigma$ is star-shaped, $1\leq l<k\leq n-1$ and there are two families of nonnegative smooth functions $\{a_i (r)\}_{i=0}^{l-1}$ and $\{b_j (r)\}_{j=l}^{k}$, which are monotone decreasing  and monotone increasing respectively, at least one of them not vanishing, such that
		\begin{equation*}
			\sum_{i=0}^{l-1}a_i (r) H_i (\tilde{\kappa})=\sum_{j=l}^{k}b_j (r) H_j (\tilde{\kappa}),
		\end{equation*}
	\end{itemize}
	then $\Sigma$ is a geodesic sphere.
\end{theorem}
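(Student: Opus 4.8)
The plan is to follow the integral scheme used for Theorem~\ref{thm1.3}, the only genuinely new ingredient being a form of the shifted Minkowski formula of Lemma~\ref{wmkl} that allows an arbitrary radial weight. Fix the base point $p_{0}$, let $r$ be the distance to $p_{0}$, set $V=\cosh r$ and let $X=\sinh r\,\partial_{r}$ be the associated conformal field, which satisfies $\bar\nabla_{Y}X=V\,Y$ because $V$ is a static potential with $\bar\nabla^{2}V=V\,\bar g$. Writing $\nu$ for the outward normal, $u=\langle X,\nu\rangle$ for the support function and $\tilde T_{j-1}$ for the Newton tensor of the shifted Weingarten operator $W-\mathrm{id}$ (still a Codazzi tensor, and hence divergence free, since $\mathrm{id}$ is parallel), I would compute $\mathrm{div}_{\Sigma}\!\big(f(r)\,\tilde T_{j-1}X^{T}\big)$ for a smooth $f$ and integrate over the closed $\Sigma$. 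Using $\nabla^{\Sigma}r=(\sinh r)^{-1}X^{T}$ this yields, in the sign convention for which the classical formula reads $\int_{\Sigma}(VH_{j-1}-uH_{j})\,d\mu=0$, the weighted identity
\begin{equation*}
\int_{\Sigma} f(r)\big[(V-u)H_{j-1}(\tilde\kappa)-u\,H_{j}(\tilde\kappa)\big]\,d\mu
=-\frac{1}{c_{j}}\int_{\Sigma}\frac{f'(r)}{\sinh r}\,\tilde T_{j-1}(X^{T},X^{T})\,d\mu,\qquad c_{j}=j\binom{n}{j}.
\end{equation*}
The shift has turned the classical weight $V$ into $V-u$, which is the content of Lemma~\ref{wmkl}; the new point is that, since $\tilde\kappa\in\overline{\Gamma_{k}^{+}}$ forces $\tilde T_{j-1}\ge 0$ for every $j\le k$, the right-hand side is $\le 0$ when $f$ is nondecreasing and $\ge 0$ when $f$ is nonincreasing. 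This is exactly the dichotomy the hypotheses are built around.

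With this in hand I would run the argument term by term. Taking $f=b_{j}$ (nondecreasing) gives
\begin{equation*}
\int_{\Sigma} b_{j}(r)\,u\,H_{j}(\tilde\kappa)\,d\mu\ \ge\ \int_{\Sigma} b_{j}(r)\,(V-u)\,H_{j-1}(\tilde\kappa)\,d\mu,
\end{equation*}
while taking $f=a_{i}$ (nonincreasing) reverses the inequality. In case~(ii) I would pair the constraint $a_{0}(r)=\sum_{j}b_{j}(r)H_{j}(\tilde\kappa)$ with $u\,d\mu$ and with $(V-u)\,d\mu$, use the displayed inequalities to trade each $H_{j}$ for $H_{j-1}$, and feed the result into the family of Newton--Maclaurin inequalities
\begin{equation*}
H_{j}(\tilde\kappa)\,H_{i-1}(\tilde\kappa)\ \le\ H_{j-1}(\tilde\kappa)\,H_{i}(\tilde\kappa)\qquad(i\le j),
\end{equation*}
valid on $\overline{\Gamma_{k}^{+}}$ with equality if and only if $\tilde\kappa$ is a multiple of $(1,\dots,1)$. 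The upshot should be a single integral inequality of the shape $\int_{\Sigma}(\text{nonnegative weight})\cdot(\text{Newton--Maclaurin defect})\,d\mu\le 0$. Cases~(i) and~(iii) are handled identically, the pivot index $l$ separating the low-order terms (weighted by the decreasing $a_{i}$) from the high-order terms (weighted by the increasing $b_{j}$): the weighted formula brings both sides to the common order $l-1$, and the Newton--Maclaurin inequalities then fix the sign. In cases~(ii) and~(iii), where a zero-order term $a_{0}$ is present, I would use that $\Sigma$ star-shaped forces $u>0$ to control its sign, which is why (iii) carries that hypothesis.

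The crux — and the step I expect to be the main obstacle — is the bookkeeping that makes the telescoped Minkowski identities and the several Newton--Maclaurin inequalities collapse to a single equality. One must verify that after the pairings every correction integral $\int_{\Sigma}(\sinh r)^{-1}f'\,\tilde T_{j-1}(X^{T},X^{T})\,d\mu$ enters with the sign dictated by the prescribed monotonicity, so that no correction can cancel the Newton--Maclaurin defect; this is precisely why the $a_{i}$ must be decreasing and the $b_{j}$ increasing, and why at least one weight is required not to vanish. Because $\tilde\kappa$ only lies on the boundary of the cone, I would justify the Newton--Maclaurin inequalities and their rigidity by approximation from the open cone $\Gamma_{k}^{+}$. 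The integral inequality then forces the defect to vanish pointwise, so $\tilde\kappa$ is everywhere a multiple of $(1,\dots,1)$; hence $\Sigma$ is totally umbilic, and a closed totally umbilic hypersurface of $\mathbb{H}^{n+1}$ is a geodesic sphere.
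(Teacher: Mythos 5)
Your weighted Minkowski identity is exactly the paper's Lemma \ref{pml} (take $\phi=f(r)$ and use $\nabla V=\sinh r\,\nabla r$), your sign analysis of the correction term under the monotonicity hypotheses matches (\ref{air})--(\ref{bjrv}), and your treatment of case (i) --- pair the constraint with $u\,d\mu$, trade each $H_j$ down to $H_{j-1}$, then close with the Newton--Maclaurin inequality extracted from the constraint --- is precisely the paper's argument. The genuine gap is case (ii). The zero-order term cannot enter your scheme at all: there is no Minkowski identity pairing $uH_0$ with $(V-u)H_{-1}$ (the formulas (\ref{mkf}), (\ref{pmk}) start at $k=1$), so trading down yields only the one-sided chain
\begin{equation*}
\int_\Sigma a_0(r)\,u\,d\mu \;\geq\; \int_\Sigma (V-u)\sum_{j=1}^k b_j(r)H_{j-1}(\tilde\kappa)\,d\mu \;\geq\; \int_\Sigma a_0(r)\,\frac{V-u}{H_1(\tilde\kappa)}\,d\mu,
\end{equation*}
and nothing in your proposal reverses it. Your stated fallback --- ``$\Sigma$ star-shaped forces $u>0$'' --- is unavailable, because case (ii) carries no star-shapedness hypothesis (that hypothesis appears only in (iii)); and even granting $u\geq 0$, the pointwise inequalities still close in the wrong direction. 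This is not bookkeeping: case (ii) contains the Alexandrov-type theorem for constant $H_k(\tilde\kappa)$ (Theorem \ref{hwz}), which is proved via embeddedness, not via Hsiung-type identities. The paper's proof first divides by $a_0$ (the ratios $b_j/a_0$ remain nonnegative and increasing, reducing to $1=\sum_j b_j H_j(\tilde\kappa)$) and then invokes the Heintze--Karcher inequality of Hu--Wei--Zhou, Proposition \ref{hkp}, together with $\int_\Sigma u\,d\mu=(n+1)\int_\Omega V\,d\mathrm{vol}$, to get the reverse inequality $\int_\Sigma \frac{V-u}{H_1(\tilde\kappa)}\,d\mu\geq\int_\Sigma u\,d\mu$; equality throughout then forces umbilicity. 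Your proposal never mentions this ingredient, and without it case (ii) does not close.

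A smaller but real inaccuracy: case (iii) is not ``handled identically'' to (i) by bringing both sides to order $l-1$. Since the left-hand side contains $a_0H_0$, trading down degenerates (the $a_0$ term drops to $H_{-1}=0$ and one obtains only $\int_\Sigma a_0 u\,d\mu\geq 0$, which carries no rigidity). The paper instead trades both sides \emph{up}, to $H_{i+1}$ and $H_{j+1}$ (equations (\ref{air1})--(\ref{bjrv1}) and (\ref{wsnmij111})); this is exactly where star-shapedness $u\geq 0$ is needed, to multiply the pointwise Newton--Maclaurin inequality by $u$, and it is also the source of the index restriction $l<k\leq n-1$. In short, the direction of the telescoping is dictated by which side carries the zero-order term, and your description gets it backwards for (iii), while for (ii) the argument needs a tool --- Proposition \ref{hkp} --- that lies outside the integral-identity framework you set up.
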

In fact, the above equation is the equation of Krylov type which has been introduced and studied by Krylov in \cite{K1995} in the following form
\begin{equation}\label{Krylov problem}
	\sum_{i=0}^{k-1} \alpha_{i}(x) H_i\left(D^2 u\right)=H_k\left(D^2 u\right), \quad x \in \Omega \subset \mathbb{R}^n,
\end{equation}
where $\Omega $ is a $(k-1)$-convex domain and $u$ is a twice continuously differentiable function. It's an extension of the work   investigated by Caffarelli et al. \cite{CNS1984, CNS1985} on the Hessian equation. Krylov observed that if $\alpha_{i}(x) \geq 0$ for $0 \leq i \leq k-1$, the natural admissible cone to make the equation \eqref{Krylov problem} elliptic is the $\Gamma_k^{+}$-cone which is the same as the $k$-Hessian equation case.

Theorem \ref{thm+1} contains the case where $\frac{H_{k} (\tilde{\kappa})}{H_{l} (\tilde{\kappa})}=\eta (r)$ for some monotone decreasing function $\eta$ and $k>l$. This result is the general form of Theorem \ref{thm1.1}(ii).

Next, we investigate a rigidity result of non-linear form.
\begin{theorem}\label{thm+2}
	Let $n\geq2$. Let $\Sigma$ be a closed star-shaped hypersurface with $\tilde{\kappa}\in\overline{\Gamma_{k}^+}$ and $r$ be the distance in $\mathbb{H}^{n+1}$ from a fixed point $p_{0}$. If there are two families of nonnegative, smooth,
monotone increasing functions $\{a_j (r)\}_{j=1}^{k}$ and $\{b_j (r)\}_{j=1}^{k}$, such that
	\begin{equation}\label{bjhj}
		\sum_{j=1}^{k}\bigg(a_j (r) H_j (\tilde{\kappa}) + b_j (r) H_1 (\tilde{\kappa}) H_{j-1} (\tilde{\kappa})\bigg) = \eta (r),
	\end{equation}
	for some smooth positive radially symmetric function $\eta (r)$ which is monotone decreasing in $r$, then $\Sigma$ is a geodesic sphere.
\end{theorem}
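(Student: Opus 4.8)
The plan is to run a sandwich argument whose two halves are built from a weighted shifted Minkowski formula and the Newton--Maclaurin inequalities, and whose equality case forces total umbilicity. Write $V=\cosh r$, let $X=\sinh r\,\partial_r$ be the conformal field (so $\bar\nabla X=V\bar g$) and $u=\langle X,\nu\rangle$ the support function, which is positive since $\Sigma$ is star-shaped. Let $\tilde A=A-\mathrm{id}$ be the shifted Weingarten operator, $\tilde h_{ij}=h_{ij}-g_{ij}$ the shifted second fundamental form, and $\tilde T_{j}$ the $j$-th Newton tensor of $\tilde A$. Then $\nabla_i(X^T)_j=(V-u)g_{ij}-u\tilde h_{ij}$, the $\tilde T_{j}$ are divergence free, and, because $\tilde\kappa\in\overline{\Gamma_k^+}$, they are positive semidefinite for $0\le j\le k-1$. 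Using $\nabla^\Sigma r=X^T/\sinh r$ and integrating the divergence of $g(r)\tilde T_{j-1}X^T$ over the closed hypersurface gives, for every smooth radial weight $g$, the identity
\begin{equation*}
c_j\int_\Sigma g(r)\big[(V-u)H_{j-1}(\tilde\kappa)-uH_j(\tilde\kappa)\big]\,d\mu=-\int_\Sigma g'(r)\,\frac{\tilde T_{j-1}(X^T,X^T)}{\sinh r}\,d\mu,
\end{equation*}
with $c_j>0$ a dimensional constant. In particular the right-hand side vanishes for constant $g$, recovering the plain Minkowski identity $\int_\Sigma(V-u)H_{j-1}\,d\mu=\int_\Sigma uH_j\,d\mu$, and it is $\le0$ for non-decreasing $g$, i.e. $\int_\Sigma g\,uH_j\,d\mu\ge\int_\Sigma g(V-u)H_{j-1}\,d\mu$.

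For the first (lower) estimate I would multiply \eqref{bjhj} by $u$ and integrate. On each nonlinear term I apply the Newton--Maclaurin inequality $H_1(\tilde\kappa)H_{j-1}(\tilde\kappa)\ge H_j(\tilde\kappa)$, valid on $\overline{\Gamma_k^+}$ with equality precisely when $\tilde\kappa$ is umbilical; since $u,b_j\ge0$ this yields $\int_\Sigma u\,\eta\,d\mu\ge\sum_j\int_\Sigma u(a_j+b_j)H_j\,d\mu$. Applying the weighted Minkowski inequality with the non-decreasing weights $g=a_j+b_j$ then gives
\begin{equation*}
\int_\Sigma u\,\eta(r)\,d\mu\ \ge\ \sum_{j=1}^{k}\int_\Sigma (V-u)\big(a_j(r)+b_j(r)\big)H_{j-1}(\tilde\kappa)\,d\mu .
\end{equation*}
The nonnegative gap in this chain is $\sum_j\int_\Sigma u\,b_j\,(H_1H_{j-1}-H_j)\,d\mu$ together with the Minkowski defect integrals, and the whole point is to prove these gaps must vanish.

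The decisive and hardest step is the reverse estimate $\int_\Sigma u\,\eta\,d\mu\le\sum_j\int_\Sigma(V-u)(a_j+b_j)H_{j-1}\,d\mu$, in which the monotone decrease of $\eta$ must be used essentially. The mechanism is transparent when $k=1$: there \eqref{bjhj} reads $(a_1+b_1)H_1(\tilde\kappa)=\eta$, so $H_1(\tilde\kappa)=f(r)$ for the decreasing radial function $f=\eta/(a_1+b_1)$. The order-two Minkowski identity $\int(V-u)H_1=\int uH_2$ together with $H_2\le H_1^2$ bounds $\int_\Sigma(V-u)H_1$ above by $\int_\Sigma uH_1^2$, while the weighted Minkowski identity taken with the decreasing weight $g=f=H_1$ bounds the same quantity below by $\int_\Sigma uH_1^2$; the two coincide, so $\int_\Sigma u(H_1^2-H_2)\,d\mu=0$ and, as $u>0$, $H_1^2\equiv H_2$, i.e. $\Sigma$ is totally umbilical. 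For general $k$ and the nonlinear combination \eqref{bjhj} I would run the analogous sandwich on the top-order part of the equation, pairing the higher Minkowski identities $\int(V-u)H_j=\int uH_{j+1}$ and the degree-raising inequalities $H_{j-1}H_{j+1}\le H_j^2$ against the weighted Minkowski identity taken with a decreasing radial weight assembled from $\eta$ and the ratios $\eta/(a_j+b_j)$. Once the two chains close up, every Newton--Maclaurin inequality invoked is forced to equality pointwise, so $\tilde\kappa_1=\cdots=\tilde\kappa_n$ on $\Sigma$; a closed totally umbilical hypersurface of $\mathbb H^{n+1}$ is a geodesic sphere, which is the desired conclusion.

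I expect the main obstacle to be exactly this reverse estimate. For $k\ge2$ the individual shifted mean curvatures $H_j(\tilde\kappa)$ are no longer radial---only the prescribed combination $\eta(r)$ is---so the decreasing weight required in the weighted Minkowski formula cannot be a single curvature as in the model case and must be manufactured from $\eta$ and the coefficient functions; moreover the products $H_1H_{j-1}$ obstruct a purely linear telescoping. Arranging the degree-raising Newton--Maclaurin inequalities and the weighted Minkowski formula so that the upper and lower chains meet exactly, turning all intermediate inequalities into equalities simultaneously, is the delicate technical heart of the proof.
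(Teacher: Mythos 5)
Your first half reproduces the paper's opening moves: after normalizing $\eta\equiv 1$ (dividing \eqref{bjhj} by $\eta$ keeps the coefficients increasing, since an increasing function over a positive decreasing one is increasing), one multiplies by $u\ge 0$, applies the Newton--Maclaurin inequality $H_1(\tilde\kappa)H_{j-1}(\tilde\kappa)\ge H_j(\tilde\kappa)$, and then the weighted Minkowski formula (Lemma \ref{pml}) with the increasing weights $a_j,b_j$, exactly as you do. The genuine gap is the reverse estimate, which you correctly identify as the crux but do not supply for $k\ge 2$. The missing idea is not a cleverer radial weight in the Minkowski formula: it is the Heintze--Karcher-type inequality of Hu--Wei--Zhou (Proposition \ref{hkp}), which gives $\int_\Sigma u\,d\mu=(n+1)\int_\Omega V\,d\mathrm{vol}\le\int_\Sigma\frac{V-u}{H_1(\tilde\kappa)}\,d\mu$ whenever $H_1(\tilde\kappa)>0$ (here guaranteed pointwise by \eqref{bjhj}, $\eta>0$, $\tilde\kappa\in\overline{\Gamma_k^+}$ and \eqref{nm2}). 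To make the two chains meet at this quantity, the paper inserts one more Newton--Maclaurin step after the stage where you stop: from $\sum_j\int_\Sigma(V-u)\bigl(a_j+b_j\bigr)H_{j-1}(\tilde\kappa)\,d\mu$ it uses $H_{j-1}(\tilde\kappa)\ge H_j(\tilde\kappa)/H_1(\tilde\kappa)$ on the $a_j$-terms and writes $b_jH_{j-1}=b_jH_1H_{j-1}/H_1$, so the normalized equation reassembles inside the integral and the lower bound becomes exactly $\int_\Sigma\frac{V-u}{H_1(\tilde\kappa)}\,d\mu$. Equality in the Heintze--Karcher inequality then yields umbilicity directly, as the rigidity statement is built into Proposition \ref{hkp}; no pointwise discussion of the Newton--Maclaurin equality cases is needed (your equality analysis also assumes $u>0$, which the definition \eqref{spf} of star-shaped does not guarantee).

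Your substitute mechanism --- a sandwich of Minkowski identities with the curvature itself as the decreasing weight --- works only in the degenerate case $k=1$, where the equation forces $H_1(\tilde\kappa)$ to equal a radial, hence admissible, weight. For $k\ge 2$ the individual $H_j(\tilde\kappa)$ are not radial, so nothing in your scheme produces the required decreasing weight, as you yourself concede; and the needed reverse inequality is not a formal consequence of \eqref{mkf} and \eqref{nm1}: in the shifted setting it is the substantially deeper inequality \eqref{hkf}, proved in \cite{HWZ23} by a flow/parametrization argument rather than by integral formulas. As written, your proposal therefore proves the theorem only for $k=1$ and leaves the general case open.
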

Theorem \ref{thm+2} contains two special cases which are worth mentioning: (i) $H_{k}(\tilde{\kappa})=\eta(r)$ and (ii) $H_{1} (\tilde{\kappa}) H_{k-1} (\tilde{\kappa}) = \eta (r)$, where $\eta(r)$ is a monotone decreasing function. The second case can be seen as a ``non-linear" version of Theorem \ref{hwz}.

In \cite[Theorem 3]{KLP18}, Kwong, Lee and Pyo proved a rigidity theorem for self-expanding solitons to the weighted generalized inverse curvature flow in $\R^{n+1}$
\begin{equation}
	\frac{d}{dt}X=\sum_{0 \leq i < j \leq n} a_{i,j}\left(\frac{H_{i}}{H_{j}}\right)^{\frac{1}{j-i}}\nu,
\end{equation}
where the weight functions $\left\{a_{i,j}(x)|0 \leq i < j \leq n\right\}$ are non-negative functions on the hypersurface satisfying $\sum_{0 \leq i < j \leq n}a_{i,j}(x)=1$. We next extend it to the hyperbolic case.
\begin{theorem}\label{thm+3}
	Let $\left\{a_{i,j}(x)|0 \leq i < j \leq n\right\}$ be non-negative functions on the hypersurface satisfying $\sum_{0 \leq i < j \leq n}a_{i,j}(x)=1$, $k=\max\left\{j|a_{i,j}>0 \text{~for some~} 0 \leq i < j \leq n\right\}$, and $\Sigma$ be a closed hypersurface with $\tilde{\kappa}\in\Gamma_{k}^+$ in $\mathbb{H}^{n+1}$. If there exists a constant $\beta > 0$ satisfying
	\begin{equation}\label{aij}
		\sum_{0 \leq i < j \leq k} a_{i,j}\left(\frac{H_{i}(\tilde{\kappa})}{H_{j}(\tilde{\kappa})}\right)^{\frac{1}{j-i}} = \beta \frac{u}{V-u},
	\end{equation}
	where $u=\left\langle \sinh r\partial_{r}, \nu \right\rangle$ is the support function, then $\Sigma$ is a geodesic sphere.
	
\end{theorem}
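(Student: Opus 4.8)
The plan is to squeeze the constant $\beta$ between $1$ and $1$ by pairing the two ends of a Newton--Maclaurin chain with the Heintze--Karcher inequality \eqref{hkf} and the Minkowski-type formula of Lemma \ref{wmkl}, and then to read off the rigidity from the equality case of \eqref{hkf}. First I would record the positivity facts forced by the hypotheses. Since $\tilde{\kappa}\in\Gamma_{k}^{+}$ we have $H_i(\tilde{\kappa})>0$ for all $0\le i\le k$, so the left-hand side of \eqref{aij} is positive; as $V-u=\cosh r-\langle\sinh r\,\partial_r,\nu\rangle>0$ everywhere (because $u\le\sinh r<\cosh r$), equation \eqref{aij} forces $u>0$ on all of $\Sigma$. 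Hence $\Sigma$ is a radial graph, star-shaped about $p_0$ and embedded, and $\int_{\Sigma}u\,d\mu>0$. It is also worth recording the model computation on a centered geodesic sphere of radius $r$, where $\kappa_i\equiv\coth r$ gives $\big(H_i(\tilde{\kappa})/H_j(\tilde{\kappa})\big)^{1/(j-i)}=(\coth r-1)^{-1}=u/(V-u)$; since $\sum a_{i,j}=1$, this shows $\beta=1$ is exactly the value consistent with a geodesic sphere.

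Next I would invoke the Newton--Maclaurin inequalities. For $\tilde{\kappa}\in\Gamma_{k}^{+}$ the ratios $H_{p-1}(\tilde{\kappa})/H_p(\tilde{\kappa})$ are nondecreasing in $p$, so for every pair $0\le i<j\le k$
\[
\frac{1}{H_1(\tilde{\kappa})}=\frac{H_0}{H_1}\le\frac{H_i}{H_{i+1}}\le\Big(\frac{H_i(\tilde{\kappa})}{H_j(\tilde{\kappa})}\Big)^{\frac{1}{j-i}}\le\frac{H_{j-1}}{H_j}\le\frac{H_{k-1}(\tilde{\kappa})}{H_k(\tilde{\kappa})}.
\]
Multiplying by $a_{i,j}\ge0$, summing, and using $\sum a_{i,j}=1$ together with \eqref{aij} yields the two-sided bound
\[
\frac{1}{H_1(\tilde{\kappa})}\le\beta\,\frac{u}{V-u}\le\frac{H_{k-1}(\tilde{\kappa})}{H_k(\tilde{\kappa})}.
\]

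The heart of the argument is to integrate each end against the appropriate identity. From the left inequality, $(V-u)/H_1(\tilde{\kappa})\le\beta u$ pointwise; integrating and applying the Heintze--Karcher inequality \eqref{hkf}, which gives $\int_{\Sigma}u\,d\mu\le\int_{\Sigma}(V-u)/H_1(\tilde{\kappa})\,d\mu$, produces $\int_{\Sigma}u\,d\mu\le\beta\int_{\Sigma}u\,d\mu$, hence $\beta\ge1$. From the right inequality, $\beta u\,H_k(\tilde{\kappa})\le(V-u)H_{k-1}(\tilde{\kappa})$ pointwise; integrating and applying the Minkowski-type formula of Lemma \ref{wmkl}, namely $\int_{\Sigma}(V-u)H_{k-1}(\tilde{\kappa})\,d\mu=\int_{\Sigma}u\,H_k(\tilde{\kappa})\,d\mu$, produces $\beta\int_{\Sigma}u\,H_k(\tilde{\kappa})\,d\mu\le\int_{\Sigma}u\,H_k(\tilde{\kappa})\,d\mu$; since $\int_{\Sigma}u\,H_k(\tilde{\kappa})\,d\mu=\int_{\Sigma}(V-u)H_{k-1}(\tilde{\kappa})\,d\mu>0$ (using $V-u>0$ and $H_{k-1}(\tilde{\kappa})>0$), this forces $\beta\le1$. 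Therefore $\beta=1$, and the Heintze--Karcher chain collapses to $\int_{\Sigma}u\,d\mu=\int_{\Sigma}(V-u)/H_1(\tilde{\kappa})\,d\mu$. By the equality case of \eqref{hkf} established in \cite{HWZ23}, $\Sigma$ must be a geodesic sphere.

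The main obstacle I anticipate is arranging the contraction so that the two ends of the Newton--Maclaurin chain pair with the correct integral identity: the $1/H_1$ end must be matched with \eqref{hkf} (which involves exactly $H_1$) to produce $\beta\ge1$, while the $H_{k-1}/H_k$ end must be matched with the shifted Minkowski formula of Lemma \ref{wmkl} to produce $\beta\le1$, and it is obtaining both sharp bounds simultaneously that pins $\beta=1$. A secondary point requiring care is justifying the positivity and embeddedness needed to invoke \eqref{hkf} and to guarantee $\int_{\Sigma}u\,d\mu>0$; as noted above, these follow from the sign structure forced by \eqref{aij} itself, so no extra hypothesis beyond $\tilde{\kappa}\in\Gamma_{k}^{+}$ is needed.
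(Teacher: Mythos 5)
Your proposal is correct, and the first half coincides with the paper's own argument: the positivity facts ($H_i(\tilde{\kappa})>0$, $V-u>0$, hence $u>0$ from \eqref{aij}), the two-sided Newton--Maclaurin bound $1/H_1(\tilde{\kappa})\le\beta u/(V-u)\le H_{k-1}(\tilde{\kappa})/H_k(\tilde{\kappa})$, and the derivation of $\beta\le1$ by integrating against the Minkowski formula \eqref{mkf} at level $k$ are exactly the paper's steps (note only that the unweighted formula you invoke is \eqref{mkf} of Lemma \ref{mkl}, i.e.\ \eqref{wmk} with $\chi\equiv1$, not Lemma \ref{wmkl} itself). Where you genuinely diverge is in the other bound and in the rigidity step. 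The paper obtains $\beta\ge1$ without any Heintze--Karcher input, simply by integrating $(V-u)\le\beta u H_1(\tilde{\kappa})$ and using \eqref{mkf} with $k=1$, namely $\int_\Sigma(V-u)\,d\mu=\int_\Sigma uH_1(\tilde{\kappa})\,d\mu$; and it concludes rigidity from the pointwise equality case of the Newton--Maclaurin inequalities, which forces a separate treatment of $k=1$ (via $H_2(\tilde{\kappa})$ and \eqref{mkf} at level $2$). You instead pair the $1/H_1$ end with Proposition \ref{hkp} to get $\beta\ge1$, and then read rigidity off the equality case of \eqref{hkf} (umbilicity), which handles all $k\ge1$ uniformly and avoids the paper's case analysis and its pointwise equality discussion. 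The trade-off is that your route needs $\Sigma$ to bound a domain so that \eqref{hkf} applies, plus the conversion $\int_\Sigma u\,d\mu=(n+1)\int_\Omega V\,d\mathrm{vol}$ and $H-n=nH_1(\tilde{\kappa})$; you correctly anticipate this and derive embeddedness from the star-shapedness forced by $u>0$ (a covering-space argument valid for $n\ge2$), whereas the paper's purely integral-formula proof never needs \eqref{hkf} and so applies verbatim to closed immersed hypersurfaces. Both proofs are sound; yours buys a cleaner equality analysis at the cost of an extra geometric ingredient.
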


For the first application of Theorem \ref{thm+1} is a large class of the rigidity results for hypersurfaces with constant linear combinations of $H_{k}$.
\begin{corollary}\label{coro1}
	 Let $1\leq k\leq n$ be an integer and $\Sigma$ be a closed hypersurface in $\mathbb{H}^{n+1}$ with $\tilde{\kappa}\in\overline{\Gamma_{k}^+}$. If there are nonnegative constants $a_0$ and $\{b_j\}_{j=1}^{k}$, at least one of them not vanishing, such that
		\begin{equation*}
			a_0 + \sum_{i=1}^{k}(-1)^{i+1}b_{i}=\sum_{l=1}^{k} \sum_{j=0}^{k-l}(-1)^{j}\binom{l+j}{l}b_{l+j} H_l (\kappa),
		\end{equation*}
	then $\Sigma$ is a geodesic sphere.
\end{corollary}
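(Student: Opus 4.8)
The plan is to reduce Corollary \ref{coro1} to case (ii) of Theorem \ref{thm+1}, applied with \emph{constant} coefficient functions. The only real content is a purely algebraic identity converting the shifted normalized mean curvatures into the ordinary ones. Since the shifted principal curvatures are $\tilde{\kappa}_i = \kappa_i - 1$, I claim that
\begin{equation*}
	H_j(\tilde{\kappa}) = \sum_{l=0}^{j}(-1)^{j-l}\binom{j}{l}H_l(\kappa), \qquad 1 \le j \le k.
\end{equation*}
To verify this I would expand the elementary symmetric polynomials under the translation $\kappa \mapsto \kappa - \mathbf{1}$. Writing $\sigma_j$ for the unnormalized symmetric functions, the generating-function computation $\prod_i\big(1 + s(\kappa_i - 1)\big) = \sum_{l}(1-s)^{n-l}s^l \sigma_l(\kappa)$ gives the translation formula $\sigma_j(\tilde{\kappa}) = \sum_{l=0}^{j}(-1)^{j-l}\binom{n-l}{j-l}\sigma_l(\kappa)$. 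Combining this with the normalization $H_m = \binom{n}{m}^{-1}\sigma_m$ and the elementary simplification $\binom{n}{j}^{-1}\binom{n}{l}\binom{n-l}{j-l}=\binom{j}{l}$ yields the displayed identity; the point worth noting is that all dimension-dependent factors $\binom{n}{\cdot}$ cancel, leaving a clean, dimension-independent combinatorial expression.

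Next I would substitute this identity into the linear combination $\sum_{j=1}^{k} b_j H_j(\tilde{\kappa})$ and split off the term $l=0$, for which $H_0(\kappa)=1$. The $l=0$ contribution is $\sum_{j=1}^{k}(-1)^j b_j = -\sum_{i=1}^{k}(-1)^{i+1}b_i$, while interchanging the order of summation in the remaining terms and reindexing $j = l + m$ produces precisely the double sum $\sum_{l=1}^{k}\sum_{m=0}^{k-l}(-1)^m\binom{l+m}{l}b_{l+m}H_l(\kappa)$ appearing on the right-hand side of the corollary's hypothesis. Consequently the stated relation
\begin{equation*}
	a_0 + \sum_{i=1}^{k}(-1)^{i+1}b_i = \sum_{l=1}^{k}\sum_{j=0}^{k-l}(-1)^{j}\binom{l+j}{l}b_{l+j}H_l(\kappa)
\end{equation*}
is equivalent to the much simpler identity $a_0 = \sum_{j=1}^{k} b_j H_j(\tilde{\kappa})$.

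Finally I would invoke Theorem \ref{thm+1}(ii). Taking the constant functions $a_0(r) \equiv a_0$ and $b_j(r)\equiv b_j$, which are trivially (non-strictly) monotone decreasing and monotone increasing respectively, nonnegative, and with at least one not vanishing by assumption, the equivalent relation $a_0 = \sum_{j=1}^{k} b_j H_j(\tilde{\kappa})$ is exactly \eqref{a0r}. Since $\tilde{\kappa}\in\overline{\Gamma_k^+}$ is assumed, Theorem \ref{thm+1}(ii) applies verbatim and forces $\Sigma$ to be a geodesic sphere. There is no genuine analytic obstacle here, as no geometric input beyond Theorem \ref{thm+1} is required; the step to get right is the combinatorial identity together with the bookkeeping in the reindexing, and in particular the cancellation of the $\binom{n}{\cdot}$ factors that makes the shifted-to-unshifted conversion independent of the dimension.
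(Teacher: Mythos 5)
Your proposal is correct and follows essentially the same route as the paper: the identity you derive for $H_j(\tilde{\kappa})$ in terms of $H_l(\kappa)$ is exactly the paper's Lemma \ref{hkl} (proved there by the same translation/generating-function computation, including the cancellation of the $\binom{n}{\cdot}$ factors), and the paper likewise concludes by rewriting the hypothesis as $a_0=\sum_{j=1}^k b_j H_j(\tilde{\kappa})$ and invoking Theorem \ref{thm+1}(ii) with constant coefficients.
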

\begin{remark}
	The third author and Xia \cite{WX14} proved the rigidity for $k$-convex hypersurfaces with constant linear combinations of $H_{k}$, i.e.,
	$$
	a_0 = \sum_{j=1}^{k}b_j H_j,
	$$
	where the coefficients $a_0$ and $b_j$ are nonnegative. In fact, Corollary \ref{coro1} obtains a larger class of such rigidity results including linear combinations of $H_{k}$ with negative coefficients.
\end{remark}
The second application of Theorem \ref{thm+1} is about rigidity problems on some intrinsic curvature functions of induced metric from that of the hyperbolic space. Let $(\Sigma,g)$ be a hypersurface in $\mathbb{H}^{n+1}$. The $k$th Gauss-Bonnet curvature $L_{k}$ of the metric $g$ is defined by
\begin{equation}\label{lk}
	L_k(g):=\frac{1}{2^k} \delta_{j_1 j_2 \cdots j_{2 k-1} j_{2 k}}^{i_1 i_2 \cdots i_{2 k-1} i_{2 k}} R_{i_1 i_2}{ }^{j_1 j_2} \cdots R_{i_{2 k-1} i_{2 k}}{ }^{j_{2 k-1} j_{2 k}},
\end{equation}
where $R_{i j}{ }^{k l}$ is the Riemannian curvature tensor in the local coordinates with respect to the metric $g$, and the generalized Kronecker delta is defined by
$$
\delta_{j_1 j_2 \cdots j_r}^{i_1 i_2 \cdots i_r}=\operatorname{det}\left(\begin{array}{cccc}
	\delta_{j_1}^{i_1} & \delta_{j_2}^{i_1} & \cdots & \delta_{j_r}^{i_1} \\
	\delta_{j_1}^{i_2} & \delta_{j_2}^{i_2} & \cdots & \delta_{j_r}^{i_2} \\
	\vdots & \vdots & \vdots & \vdots \\
	\delta_{j_1}^{i_r} & \delta_{j_2}^{i_r} & \cdots & \delta_{j_r}^{i_r}
\end{array}\right) .
$$
The $k$th Gauss-Bonnet curvature $L_k$ of the induced metric of a hypersurface in hyperbolic space can be expressed in terms of the shifted $k$th mean curvatures (see Lemma \ref{lkl} below). Explicitly,
$$
L_{k} = \binom{n}{2k} (2k)! \sum_{j=0}^{k}2^{j}\binom{k}{j}H_{2k-j}(\tilde{\kappa}).
$$
Notice here all the coefficients are positive. Therefore, as a direct consequence of Theorem \ref{thm+1}(ii), we have the following
\begin{corollary}\label{coro2}
	Let $1\leq k\leq\frac{n}{2}$ be an integer and $\Sigma$ be a closed hypersurface embedded in the hyperbolic space $\mathbb{H}^{n+1}$ with $\tilde{\kappa}\in\overline{\Gamma_{2k}^+}$. If there are nonnegative constants $a_0$ and $\{b_j\}_{j=1}^{k}$, at least one of them not vanishing, such that
	\begin{equation*}
		a_0=\sum_{j=1}^{k}b_j L_j,
	\end{equation*}
	then $\Sigma$ is a geodesic sphere. In particular, if $L_{k}$ is constant, then $\Sigma$ is a geodesic sphere.
\end{corollary}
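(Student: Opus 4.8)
The plan is to read Corollary \ref{coro2} as a purely algebraic reduction to Theorem \ref{thm+1}(ii), with no new analytic input. The starting point is the expansion of the Gauss--Bonnet curvature in terms of shifted mean curvatures recorded just above: for each integer $1 \le p \le k$,
\[
L_p = \binom{n}{2p}(2p)! \sum_{i=0}^{p} 2^i \binom{p}{i} H_{2p-i}(\tilde{\kappa}),
\]
in which every combinatorial coefficient is strictly positive. Since $1 \le k \le \frac{n}{2}$, every index $2p-i$ appearing here lies in $\{1,\dots,2k\}\subset\{1,\dots,n\}$, so the right-hand side is a genuine nonnegative linear combination of $H_1(\tilde{\kappa}),\dots,H_{2k}(\tilde{\kappa})$. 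The hypothesis $\tilde{\kappa}\in\overline{\Gamma_{2k}^+}$ guarantees $H_m(\tilde{\kappa})\ge 0$ for all $1\le m\le 2k$, hence each $L_p$ is itself nonnegative.

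First I would substitute this expansion into the hypothesis $a_0=\sum_{p=1}^{k}b_p L_p$ and collect terms according to the order $m$ of the shifted mean curvature, rewriting it as
\[
a_0 = \sum_{m=1}^{2k} \tilde{b}_m\, H_m(\tilde{\kappa}), \qquad \tilde{b}_m := \sum_{\substack{1 \le p \le k,\ 0 \le i \le p \\ 2p - i = m}} b_p \binom{n}{2p}(2p)!\, 2^i \binom{p}{i}.
\]
Each $\tilde{b}_m$ is a nonnegative constant, since the $b_p$ are nonnegative and all the weights are positive. As constants are trivially both monotone decreasing and monotone increasing, the data $a_0$ and $\{\tilde{b}_m\}_{m=1}^{2k}$ fall exactly into the admissible class of Theorem \ref{thm+1}(ii), with its top index $k$ replaced by $2k$; this is precisely why the cone hypothesis is stated as $\tilde{\kappa}\in\overline{\Gamma_{2k}^+}$.

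Next I would verify the non-triviality condition preserved under the reduction. By assumption at least one of $a_0,b_1,\dots,b_k$ is nonzero. If some $b_p>0$, then every $\tilde{b}_m$ with $p\le m\le 2p$ is strictly positive, so the reduced family $\{a_0,\tilde{b}_1,\dots,\tilde{b}_{2k}\}$ does not vanish identically. The remaining logical possibility, namely all $b_p=0$ with $a_0>0$, forces the impossible identity $a_0=0$ and therefore cannot occur. Hence Theorem \ref{thm+1}(ii) applies and yields that $\Sigma$ is a geodesic sphere. The ``in particular'' statement is the special case $b_k=1$, $b_1=\dots=b_{k-1}=0$, and $a_0$ equal to the constant value of $L_k$; the required nonnegativity $a_0\ge 0$ follows once more from $L_k\ge 0$ on $\overline{\Gamma_{2k}^+}$.

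The only points requiring care --- and the closest thing to an obstacle --- are bookkeeping rather than conceptual: confirming that the collected coefficients $\tilde{b}_m$ are nonnegative, that the index range stays within $\{1,\dots,n\}$ (which is exactly where $k\le\frac{n}{2}$ is used), and that the admissibility cone matching the top index $2k$ of the reduced sum coincides with the stated hypothesis $\tilde{\kappa}\in\overline{\Gamma_{2k}^+}$. Once these are checked, the corollary follows immediately from Theorem \ref{thm+1}(ii).
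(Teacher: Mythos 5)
Your proposal is correct and follows exactly the paper's route: the paper also expands each $L_j$ via Lemma \ref{lkl} into a positive linear combination of $H_{j}(\tilde{\kappa}),\dots,H_{2j}(\tilde{\kappa})$ and then invokes Theorem \ref{thm+1}(ii) with constant coefficients (constants being both monotone increasing and decreasing), which the paper dismisses as ``apparent.'' Your write-up merely makes explicit the bookkeeping the paper leaves implicit --- the collected coefficients $\tilde{b}_m$, the index range staying in $\{1,\dots,2k\}$, and the preservation of the non-vanishing condition --- all of which check out.
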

\begin{remark}
	In \cite{WX14}, the third author and Xia proved the rigidity result on $L_{k}$, for horoconvex hypersurfaces. Here, a hypersurface in $\mathbb{H}^{n+1}$ is horospherical convex if $\tilde{\kappa}\in\overline{\Gamma_{n}^+}$. Corollary \ref{coro2} generalizes their result to the hypersurfaces with $\tilde{\kappa}\in\overline{\Gamma_{2k}^+}$.
\end{remark}

The rest of this paper is organized as follows. In Section \ref{sec2}, we first collect some basic facts about the $k$th normalized elementary symmetric function $H_{k}$. Then we provide some Minkowski-type formulas and inequalities, which are the most important tools of this paper. In Section \ref{sec3} and \ref{sec4}, we give the proof of  the main results, Theorem \ref{thm1.1} and Theorem  \ref{thm1.3}. Section \ref{sec+} is devoted to prove the general form of the rigidity results, Theorems \ref{thm+1}--\ref{thm+3}. In Section \ref{sec5}, we focus on the rigidity problem on the intrinsic Gauss-Bonnet curvatures and show Corollaries \ref{coro1} and \ref{coro2}.

\section{Preliminaries}\label{sec2}
In this section, first let us recall some basic definitions and properties of higher order mean curvature.

Let $\sigma_{k}$ be the $k$th elementary symmetry function $\sigma_{k}:\mathbb{R}^{n}\rightarrow\mathbb{R}$ defined by
$$
\sigma_k(\lambda)=\sum_{i_1<\cdots<i_k} \lambda_{i_1} \cdots \lambda_{i_k} \text { for } \lambda=\left(\lambda_1, \cdots, \lambda_{n}\right) \in \mathbb{R}^{n}.
$$
For a symmetric $n \times n$ matrix $A=\left(A_i^j\right)$, we set
\begin{equation}\label{sgmk}
	\sigma_k(A)=\frac{1}{k !} \delta_{j_1 \cdots j_m}^{i_1 \cdots i_m} A_{i_1}^{j_1} \cdots A_{i_m}^{j_m},
\end{equation}
where $\delta_{j_1 \cdots j_m}^{i_1 \cdots i_m}$ is the generalized Kronecker symbol.
If $\lambda(A)=\left(\lambda_1(A), \cdots, \lambda_n(A)\right)$ are the real eigenvalues of $A$, then
$$
\sigma_k(A)=\sigma_k(\lambda(A)) .
$$
The $k$th Newton transformation is defined as follows
$$
\left(T_k\right)_i^j(A)=\frac{\partial \sigma_{k+1}}{\partial A_j^i}(A)=\frac{1}{k !}\delta_{i i_1 \cdots i_{k}}^{j j_1 \cdots j_{k}} A_{j_1}^{i_1} \cdots A_{j_{k}}^{i_{k}}.
$$
If $A$ is a diagonal matrix with $\lambda(A)=\left(\lambda_1, \cdots, \lambda_n\right)$, then
$$
\left(T_k\right)_i^j(A) = \frac{\partial \sigma_{k+1}}{\partial \lambda_i}(\lambda)\delta_{i}^{j}.
$$
The following formulae for the elementary symmetric functions are well-known.
\begin{lemma}[\cite{R74}]\label{tklemma}
	We have
	
	\begin{equation}\label{tk1}
		\sum_{i, j}\left(T_{k-1}\right)_{i}^{j}(A) A_{j}^{i}=k \sigma_{k}(A) .
	\end{equation}
	\begin{equation}\label{tk2}
		\sum_{i, j}\left(T_{k-1}\right)_{i}^{j}(A) \delta_{j}^{i}=(n+1-k) \sigma_{k-1}(A) .
	\end{equation}
\end{lemma}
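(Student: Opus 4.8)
The plan is to reduce both identities to a computation on the eigenvalues of $A$ and then invoke, respectively, the homogeneity of $\sigma_k$ and an elementary counting argument. Since $A$ is symmetric it can be orthogonally diagonalized, and both sides of \eqref{tk1} and \eqref{tk2} are invariant under the substitution $A \mapsto O^{t} A O$ for $O \in O(n)$: the left-hand sides are the traces $\mathrm{tr}\big(T_{k-1}(A)\,A\big)$ and $\mathrm{tr}\big(T_{k-1}(A)\big)$, and $T_{k-1}$ is equivariant (i.e. $T_{k-1}(O^{t}AO)=O^{t}T_{k-1}(A)O$, as is clear from the Kronecker-delta definition), while the right-hand sides depend only on the eigenvalues of $A$. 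Thus it suffices to establish both formulae when $A=\mathrm{diag}(\lambda_1,\dots,\lambda_n)$, in which case the relation already recorded in the excerpt,
$$(T_{k-1})_i^j(A)=\frac{\partial \sigma_k}{\partial \lambda_i}(\lambda)\,\delta_i^j,$$
applies, and one checks directly from $\sigma_k(\lambda)=\sum_{i_1<\cdots<i_k}\lambda_{i_1}\cdots\lambda_{i_k}$ that $\frac{\partial \sigma_k}{\partial \lambda_i}(\lambda)=\sigma_{k-1}(\lambda\,|\,i)$, the $(k-1)$th elementary symmetric function of the variables $\lambda$ with $\lambda_i$ deleted.

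For \eqref{tk1} I would contract with $A$. In the diagonal case $A_j^i=\lambda_i\delta_j^i$, so
$$\sum_{i,j}(T_{k-1})_i^j(A)\,A_j^i=\sum_i \lambda_i\,\frac{\partial \sigma_k}{\partial \lambda_i}(\lambda)=k\,\sigma_k(\lambda),$$
the last equality being Euler's identity for the homogeneous polynomial $\sigma_k$ of degree $k$. Since $\sigma_k(\lambda)=\sigma_k(A)$ this is precisely \eqref{tk1}. For \eqref{tk2} I would contract with the Kronecker delta:
$$\sum_{i,j}(T_{k-1})_i^j(A)\,\delta_j^i=\sum_i \frac{\partial \sigma_k}{\partial \lambda_i}(\lambda)=\sum_i \sigma_{k-1}(\lambda\,|\,i).$$
Each square-free degree-$(k-1)$ monomial $\lambda_{a_1}\cdots\lambda_{a_{k-1}}$ occurs in $\sigma_{k-1}(\lambda\,|\,i)$ exactly for the indices $i\notin\{a_1,\dots,a_{k-1}\}$, of which there are $n-(k-1)$; hence $\sum_i \sigma_{k-1}(\lambda\,|\,i)=(n-k+1)\sigma_{k-1}(\lambda)$, which gives \eqref{tk2}.

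Alternatively, I could avoid diagonalization and argue purely with the generalized Kronecker symbol: relabelling the contracted pair $(i,j)$ as an additional antisymmetrized index pair turns the left-hand side of \eqref{tk1} into $k$ copies of $A$ contracted against a rank-$k$ generalized Kronecker delta, which equals $k\sigma_k(A)$ by the defining formula \eqref{sgmk}, while \eqref{tk2} follows from the standard contraction rule $\delta^{a\,i_1\cdots i_{k-1}}_{a\,j_1\cdots j_{k-1}}=(n-k+1)\,\delta^{i_1\cdots i_{k-1}}_{j_1\cdots j_{k-1}}$ in dimension $n$. There is no genuine obstacle in this lemma; the only points demanding a little care are the justification of the reduction to the diagonal form (equivalently, the bookkeeping of the antisymmetrized indices in the direct approach) and getting the counting multiplicity $n-k+1$ right in \eqref{tk2}.
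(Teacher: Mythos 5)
Your proof is correct, but note that the paper itself offers no proof of this lemma to compare against: it states the two identities as ``well-known'' and cites Reilly \cite{R74}, so your argument supplies a verification that the paper omits. Both of your routes are sound. The diagonalization route is legitimate: equivariance $T_{k-1}(O^{t}AO)=O^{t}T_{k-1}(A)O$ holds (most transparently because $T_{k-1}(A)$ is a polynomial in $A$, or directly from the invariance of the generalized Kronecker symbol), the diagonal formula $(T_{k-1})_i^j(A)=\frac{\partial\sigma_k}{\partial\lambda_i}(\lambda)\,\delta_i^j$ is exactly what the paper records just before the lemma, and then \eqref{tk1} is Euler's identity for the degree-$k$ homogeneous polynomial $\sigma_k$ while \eqref{tk2} is your multiplicity count $\sum_i\sigma_{k-1}(\lambda\,|\,i)=(n-k+1)\sigma_{k-1}(\lambda)$, which agrees with the stated coefficient since $n-k+1=n+1-k$. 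Your alternative, purely tensorial route --- relabelling the contracted pair so that the left side of \eqref{tk1} becomes a full rank-$k$ contraction equal to $\frac{k!}{(k-1)!}\sigma_k(A)=k\sigma_k(A)$ by \eqref{sgmk}, and using the contraction rule $\sum_a\delta^{a\,i_1\cdots i_{k-1}}_{a\,j_1\cdots j_{k-1}}=(n-k+1)\,\delta^{i_1\cdots i_{k-1}}_{j_1\cdots j_{k-1}}$ for \eqref{tk2} --- is in fact closer in spirit to Reilly's original derivation, and it is the same contraction rule the paper itself invokes later in the proof of Lemma \ref{lkl}. Either argument would serve as a complete proof; no gaps.
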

The $k$th G\r{a}rding cone $\Gamma_k^{+}$ is defined by
\begin{equation}
	\Gamma_k^{+}=\left\{\lambda \in \mathbb{R}^n: \sigma_i(\lambda)>0,1 \leq i \leq k\right\} .
\end{equation}
And its closure is denoted by $\overline{\Gamma_{k}^+}$. A symmetric matrix $A$ is said to belong to $\Gamma_k^{+}$ if its eigenvalues $\lambda(A) \in \Gamma_k^{+}$.
Let
\begin{equation}
	H_k = \frac{\sigma_k}{C_{n}^k},
\end{equation}
be the normalized $k$th elementary symmetry function. As a convention, we take $H_0 = 1$, $H_{-1} = 0$. We have the following Newton-Maclaurin inequalities.
\begin{lemma}[\cite{G02}]
	 For $1 \leq l<k \leq n$ and $\lambda \in \overline{\Gamma_{k}^{+}}$, the following inequalities hold:
	\begin{equation}\label{nm1}
		H_{k-1}(\lambda)H_{l}(\lambda) \geq H_{k}(\lambda)H_{l-1}(\lambda).
	\end{equation}
	\begin{equation}\label{nm2}
		H_{l}(\lambda) \geq H_{k}(\lambda)^{\frac{l}{k}}.
	\end{equation}
	Moreover, equality holds in (\ref{nm1}) or (\ref{nm2}) at $\lambda$ if and only if $\lambda=c(1,1, \cdots, 1)$.
\end{lemma}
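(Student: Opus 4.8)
The plan is to derive both inequalities from a single elementary input, the \emph{Newton inequality}
\begin{equation*}
	H_{j}(\lambda)^2 \ge H_{j-1}(\lambda)\,H_{j+1}(\lambda), \qquad 1 \le j \le n-1,
\end{equation*}
valid for \emph{every} $\lambda\in\mathbb{R}^n$, with equality if and only if $\lambda=c(1,\dots,1)$. First I would establish this base estimate. Associate to $\lambda$ the real-rooted polynomial $P(t)=\prod_{i=1}^n(t+\lambda_i)=\sum_{m=0}^n\binom{n}{m}H_m(\lambda)\,t^{\,n-m}$, whose roots are $-\lambda_i$. By Rolle's theorem the derivative of a real-rooted polynomial is again real-rooted, and the same is true of the reversal $t\mapsto t^{\deg}P(1/t)$. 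Differentiating $P$ a total of $n-j-1$ times, reversing, and differentiating a further $j-1$ times reduces $P$ to a real quadratic $a t^2+2bt+c$ whose coefficients are, after the binomial normalization built into $H_m=\sigma_m/\binom{n}{m}$, proportional to $H_{j-1},H_j,H_{j+1}$. The discriminant condition $b^2\ge ac$ for a real-rooted quadratic is then exactly $H_j^2\ge H_{j-1}H_{j+1}$, and $b^2=ac$ forces a double root, which upon retracing the differentiations forces all $\lambda_i$ equal.

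Next I would reduce the lemma to the interior cone $\Gamma_k^+$ and pass to $\overline{\Gamma_k^+}$ by continuity. On $\Gamma_k^+$ one has $H_m(\lambda)>0$ for all $0\le m\le k$, so the ratios $q_m:=H_m(\lambda)/H_{m-1}(\lambda)$ are well defined and positive for $1\le m\le k$. The Newton inequality rewrites as $q_{m+1}\le q_m$, i.e. the sequence $(q_m)$ is non-increasing. Inequality \eqref{nm1} is then immediate: since $l<k$ we have $q_l\ge q_k$, and clearing the positive denominators $H_{l-1}H_{k-1}$ turns this into $H_{k-1}H_l\ge H_kH_{l-1}$. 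For \eqref{nm2} I would use that, since $H_0=1$, one has $H_m(\lambda)=\prod_{i=1}^m q_i$, so that $H_m(\lambda)^{1/m}=\bigl(\prod_{i=1}^m q_i\bigr)^{1/m}$ is the running geometric mean of $(q_i)$. The running geometric mean of a non-increasing positive sequence is itself non-increasing in $m$ (because $q_{m+1}\le H_m^{1/m}$), whence $H_l^{1/l}\ge H_k^{1/k}$ for $l<k$, which is precisely \eqref{nm2}.

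For the equality statement, note that equality in \eqref{nm1} gives $q_l=q_k$, and equality in \eqref{nm2} gives a constant running geometric mean on $[l,k]$; since $(q_m)$ is non-increasing, either forces $q_m=q_{m+1}$ for some index $m$ with $1\le m$ and $m+1\le k\le n$, i.e. an equality $H_m^2=H_{m-1}H_{m+1}$ in a genuine Newton inequality, hence $\lambda=c(1,\dots,1)$; conversely such $\lambda$ obviously realizes equality. Finally the inequalities extend to $\overline{\Gamma_k^+}$ by continuity, and the equality characterization persists there, the boundary points being handled either by the same tracing-back argument or by a direct limiting argument along $\Gamma_k^+$. The main obstacle is the base step: proving the Newton inequality together with its sharp equality case, since once the monotonicity $q_{m+1}\le q_m$ is in hand, \eqref{nm1}, \eqref{nm2}, and the rigidity are purely formal manipulations of the ratios $q_m$.
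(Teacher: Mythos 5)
The paper offers no proof of this lemma at all — it is quoted from Guan's lecture notes \cite{G02} — so your proposal can only be compared with the standard argument, which it essentially reproduces: Newton's inequality $H_j^2\ge H_{j-1}H_{j+1}$ via repeated differentiation and reversal of the real-rooted polynomial $\prod_i(t+\lambda_i)$, then monotonicity of the ratios $q_m=H_m/H_{m-1}$ on $\Gamma_k^+$, then telescoping for the Maclaurin inequality \eqref{nm2}. The purely inequality-theoretic part of your argument is correct, including the key observation $q_{m+1}\le \bigl(\prod_{i\le m}q_i\bigr)^{1/m}$ that makes the running geometric mean non-increasing, and the derivation of \eqref{nm1} from $q_l\ge q_k$.

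The genuine gap lies in both of your equality-case claims. First, the base claim is false as stated: for general $\lambda\in\mathbb{R}^n$, equality in $H_j^2\ge H_{j-1}H_{j+1}$ does \emph{not} force $\lambda=c(1,\dots,1)$. Take $n=3$, $j=2$, $\lambda=(1,0,0)$: then $H_2=H_3=0$, so $H_2^2=0=H_1H_3$, yet $\lambda$ is not isotropic. Your tracing-back argument breaks down precisely here: when $H_{j+1}=0$ the reversal $t\mapsto t^{\deg P}P(1/t)$ drops degree, and a double root of the final quadratic need only come from a multiple root of $P$ (here the double root at $0$), not from all roots coinciding. The correct statement is that equality \emph{together with} $H_j\ne 0$ forces $\lambda=c(1,\dots,1)$; this repaired version is all you need on the open cone $\Gamma_k^+$, where $H_0,\dots,H_k>0$ makes every Newton inequality you invoke nondegenerate, so your ratio argument does prove the rigidity there. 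Second, your closing sentence — that the equality characterization ``persists'' on $\overline{\Gamma_k^+}$ by continuity or by a limiting argument along $\Gamma_k^+$ — cannot be repaired: rigidity statements do not pass to limits, and the characterization is in fact false on the boundary of the cone. The same point $\lambda=(1,0,0)$ lies in $\overline{\Gamma_3^+}$ (it is the limit of $(1,\epsilon,\epsilon)\in\Gamma_3^+$) and realizes equality in \eqref{nm1} with $(k,l)=(3,2)$ and in \eqref{nm2}, without being a multiple of $(1,1,1)$. This imprecision is admittedly inherited from the lemma as the paper states it; in the literature the equality case is asserted on $\Gamma_k^+$, or on $\overline{\Gamma_k^+}$ under an additional nondegeneracy hypothesis such as $H_k(\lambda)\ne 0$. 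So the correct repair of your proof is: establish the nondegenerate form of Newton's equality case, run the ratio argument on $\Gamma_k^+$, extend only the \emph{inequalities} to $\overline{\Gamma_k^+}$ by continuity, and restrict the equality characterization to the open cone (or add the hypothesis $H_k\ne0$), which is also the setting in which the paper actually applies it.
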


Next, we collect some well-known results on geometry of hypersurfaces in hyperbolic space.

In this paper, we view hyperbolic space as the warped product space $\mathbb{H}^{n+1}=[0, \infty) \times \mathbb{S}^n$ equipped with the metric
$$
\bar{g}=d r^2+\lambda(r)^2 g_{\mathbb{S}^n},
$$
where $g_{\mathbb{S}^n}$ is the standard round metric of $\mathbb{S}^n$ and $\lambda(r)=\sinh r$. Let $\Sigma$ be a smooth hypersurface in $\mathbb{H}^{n+1}$. Denote $\bar{\nabla}$ and $\nabla$ as the Levi-Civita connection on $\mathbb{H}^{n+1}$ and $\Sigma$, respectively. Let $\{e_i\}_{i=1}^{n}$ and $\nu$ be an orthonormal basis and the unit outward normal of $\Sigma$, respectively. Then the induced metric $g$ of $\Sigma$ is $g_{i j}=\bar{g}\left(e_i, e_j\right)$, and the second fundamental form $h=\left(h_{i j}\right)$ of $\Sigma$ in $\mathbb{H}^{n+1}$ is given by
$$
h_{i j}=h\left(e_i, e_j\right)=\bar{g}\left(\bar{\nabla}_{e_i} \nu, e_j\right) .
$$
The principal curvatures $\kappa=\left(\kappa_1, \cdots, \kappa_n\right)$ of $\Sigma$ are the eigenvalues of the Weingarten matrix $\left(h_i^j\right)=\left(g^{j k} h_{k i}\right)$, where $\left(g^{i j}\right)$ is the inverse matrix of $\left(g_{i j}\right)$. The mean curvature of $\Sigma$ is defined as
$$
H=g^{i j}h_{i j}=\sum_{i=1}^{n}\kappa_i.
$$
A hypersurface $\Sigma$ in $\mathbb{H}^{n+1}$ is called star-shaped if its support function
\begin{equation}\label{spf}
	u=\left\langle \lambda(r)\partial_{r}, \nu \right\rangle \geq 0.
\end{equation}
Note that $X= \lambda(r) \partial_r$ is a conformal vector field satisfying
\begin{equation}\label{cvf}
	\bar{\nabla}X=\lambda' \bar{g}.
\end{equation}

We first show that there exists at least one elliptic point such that all principal curvatures $\kappa_i >1$ for $i=1, \cdots, n$ on any closed hypersurface in the hyperbolic space $\mathbb{H}^{n+1}$, by following the argument as Lemma 2.1 in \cite{LWX14}.
\begin{lemma}\label{elliptic}
	Let $\Sigma$ be a closed hypersurface in the hyperbolic space $\mathbb{H}^{n+1}$. Then there exists at least one elliptic point $x$ such that all principal curvatures $\kappa_i >1$ for $i=1, \cdots, n$ on $\Sigma$.
\end{lemma}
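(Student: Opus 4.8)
The goal is to prove Lemma \ref{elliptic}: on any closed hypersurface $\Sigma$ in $\mathbb{H}^{n+1}$ there is a point at which all principal curvatures satisfy $\kappa_i>1$. The statement references the argument of Lemma 2.1 in \cite{LWX14}, so the natural strategy is a geometric comparison argument using the largest geodesic sphere that fits around $\Sigma$.

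\medskip

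\emph{The plan is as follows.} Fix a point $p_0\in\mathbb{H}^{n+1}$ in the region bounded by $\Sigma$ (say the origin of the warped product model), and let $r$ denote the distance from $p_0$. Since $\Sigma$ is closed (hence compact), the function $r|_\Sigma$ attains a maximum at some point $x\in\Sigma$, with $R:=r(x)=\max_\Sigma r$. Let $S_R$ be the geodesic sphere of radius $R$ centered at $p_0$. Then $\Sigma$ lies inside the closed geodesic ball $\overline{B_R(p_0)}$ and touches $S_R$ at $x$ from the inside, with the two hypersurfaces tangent there. The key comparison principle is: when one hypersurface lies locally on the inner side of another and they are tangent at a common point, the principal curvatures of the inner one (with respect to the outward normal) are bounded below by those of the outer one at that point. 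I would invoke this comparison at $x$.

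\medskip

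Next I would compute the principal curvatures of the comparison sphere $S_R$. In the warped product metric $\bar g=dr^2+\sinh^2 r\, g_{\mathbb{S}^n}$, a geodesic sphere of radius $R$ centered at $p_0$ is totally umbilic with principal curvatures all equal to $\lambda'(R)/\lambda(R)=\coth R$ with respect to the outward unit normal $\partial_r$. Since $\coth R>1$ for every finite $R>0$, the comparison sphere has all principal curvatures strictly greater than $1$. Applying the comparison at the touching point $x$, every principal curvature $\kappa_i(x)$ of $\Sigma$ satisfies $\kappa_i(x)\ge \coth R>1$. Hence $\kappa_i(x)>1$ for all $i=1,\dots,n$, which is exactly the elliptic point asserted.

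\medskip

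\emph{The main subtlety} lies in setting up the tangential comparison cleanly and choosing consistent normal orientations, so that the inequality goes in the correct direction: one must verify that at the maximum point the second fundamental form of $\Sigma$ dominates that of $S_R$ (equivalently, that the Hessian comparison at an interior maximum of $r|_\Sigma$ yields $h_{ij}\ge \coth R\, g_{ij}$). Concretely, I would differentiate the constraint that $r$ attains its maximum on $\Sigma$ at $x$: the tangential gradient of $r$ vanishes there, so the outward normal of $\Sigma$ agrees with $\partial_r$ at $x$, and the tangential Hessian of $r$ on $\Sigma$ is negative semidefinite. Combining this with the ambient Hessian formula $\bar\nabla^2 r = \coth r\,(\bar g-dr\otimes dr)$ (which follows from the conformal identity \eqref{cvf}) gives, for any tangent vector $e_i$ at $x$,
\begin{equation*}
	0\ge \nabla^2_{e_i e_i}(r|_\Sigma)=\bar\nabla^2 r(e_i,e_i)-h_{ii}=\coth R - h_{ii},
\end{equation*}
so that $h_{ii}\ge\coth R>1$ for each $i$; diagonalizing yields $\kappa_i(x)>1$ for all $i$. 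This Hessian computation is the only genuine piece of work, and it is routine once the geometry of the warped product and the sign of the support function are fixed.
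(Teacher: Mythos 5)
Your proposal is correct and follows essentially the same route as the paper: both arguments take the maximum point of the distance function $r$ on $\Sigma$, observe that there $\nabla r=0$, $\nu=\partial_r$ and the tangential Hessian of $r$ is negative semidefinite, and combine this with the ambient identity $\bar\nabla^2 r=\coth r\,(\bar g-dr\otimes dr)$ (equivalently, the conformal property \eqref{cvf} of $\lambda\partial_r$ used in the paper) to conclude $\kappa_i\ge\coth R>1$. The sphere-comparison framing in your plan is just packaging; the rigorous content, the Hessian computation at the maximum, coincides with the paper's proof.
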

\begin{proof}
    Let $\left\{e_1, \cdots, e_{n}\right\}$ be a local orthonormal frame on $\Sigma$, and assume that the second fundamental form $h_{i j}=\left\langle\bar{\nabla}_{e_i} \nu, e_j\right\rangle$ is diagonal with eigenvalues $\kappa_1, \cdots, \kappa_{n}$. Then
	\begin{equation}\label{ninr}
		\nabla_{e_i} \nabla r=\nabla_{e_i}\left(\frac{1}{\lambda(r)} \lambda(r) \partial_r^{\top}\right)=-\frac{\lambda^{\prime}}{\lambda}\left(\nabla_{e_i} r\right) \partial_r^{\top}+\frac{1}{\lambda} \nabla_{e_i}\left(\lambda \partial_r^{\top}\right) .
	\end{equation}
	It follows from (\ref{cvf}) that
	\begin{equation}\label{nilrt}
		\begin{aligned}
			\nabla_{e_i}\left(\lambda \partial_r^{\top}\right) & =\nabla_{e_i}\left(\lambda \partial_r-\left\langle\lambda \partial_r, \nu\right\rangle \nu\right)=\left(\bar{\nabla}_{e_i}\left(\lambda \partial_r-\left\langle\lambda \partial_r, \nu\right\rangle \nu\right)\right)^{\top} \\
			& =\lambda^{\prime} e_i-\left\langle\lambda \partial_r, \nu\right\rangle \kappa_i e_i.
		\end{aligned}
	\end{equation}
	Substituting (\ref{nilrt}) into (\ref{ninr}), we get
	\begin{equation}\label{ninr2}
			\nabla_{e_i} \nabla r=-\frac{\lambda^{\prime}}{\lambda}\left(\nabla_{e_i} r\right) \partial_r^{\top}+\frac{1}{\lambda}\left(\lambda^{\prime}-\left\langle\lambda \partial_r, \nu\right\rangle \kappa_i\right) e_i.
	\end{equation}
	Now we consider at the maximum point $x$ of $r$, we have $\nabla r=0$, $\nu=\partial_r$ and $\nabla^2 r \leq 0$ at $x$. Then from (\ref{ninr2}), we get
	$$
	\kappa_i \geq \frac{\lambda^{\prime}}{\lambda}>1, \quad i=1, \cdots, n,
	$$
	i.e. $x$ is an elliptic point of $\Sigma$ such that all principal curvatures $\kappa_i >1$ for $i=1, \cdots, n$.
\end{proof}
We need the following Minkowski type formula in hyperbolic space, which is included in the proof of \cite[Lemma 2.6]{HLW22}  or \cite[Lemma 2.3]{HWZ23}. For completeness, we involve the proof here.
\begin{lemma}\label{mkl}
	Let $\Sigma$ be a closed hypersurface in $\mathbb{H}^{n+1}$. Denote by $V=\cosh r$ and $u=\left\langle \bar{\nabla}V, \nu \right\rangle=\left\langle \lambda\partial_{r}, \nu \right\rangle$. Then we have
	\begin{equation}\label{mkf}
		\int_{\Sigma}\left( V- u \right) H_{k-1}(\tilde{\kappa}) d \mu=\int_{\Sigma} u H_k(\tilde{\kappa}) d \mu, \quad k=1, \cdots, n.
	\end{equation}
\end{lemma}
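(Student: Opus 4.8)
The plan is to obtain \eqref{mkf} as the integral of a pointwise divergence identity, following the classical Minkowski/Hsiung--Reilly scheme but with the shape operator replaced by its shifted version. The starting point is that the weight field $X:=\lambda(r)\partial_r$ satisfies $X=\bar\nabla V$ (since $\bar\nabla\cosh r=\sinh r\,\partial_r=\lambda\partial_r$) and, by \eqref{cvf} together with $\lambda'=\cosh r=V$, is conformal: $\bar\nabla_Y X=\lambda'Y=VY$ for every $Y$. Writing $X=X^\top+u\nu$ with $u=\langle X,\nu\rangle$ and projecting $\bar\nabla_{e_i}X=Ve_i$ onto $T\Sigma$ via the Weingarten relation $\bar\nabla_{e_i}\nu=\sum_j h_i^j e_j$, I would first record the pointwise identity
\[
\nabla_i (X^\top)^j=V\,\delta_i^j-u\,h_i^j \qquad\text{on }\Sigma .
\]

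Next I would introduce the shifted Weingarten operator $\tilde A=(h_i^j-\delta_i^j)$, whose eigenvalues are the shifted principal curvatures $\tilde\kappa$, and form the tangent vector field $P$ with components $P^i=\sum_j (T_{k-1})^i_j(\tilde A)\,(X^\top)^j$ built from the $(k-1)$-th Newton transformation of $\tilde A$. The crucial structural input is that $T_{k-1}(\tilde A)$ is divergence-free, $\nabla_i(T_{k-1})^i_j(\tilde A)=0$: in the constant-curvature space $\mathbb{H}^{n+1}$ the second fundamental form obeys the Codazzi equation, so $\tilde h_{ij}=h_{ij}-g_{ij}$ is again a Codazzi tensor (because $\nabla g=0$), and the Newton tensor of a Codazzi tensor has vanishing divergence. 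Combining this with the pointwise identity above, the product rule yields
\[
\operatorname{div}P=\sum_{i,j}(T_{k-1})^i_j(\tilde A)\big(V\,\delta_i^j-u\,h_i^j\big).
\]
Substituting $h_i^j=\tilde A_i^j+\delta_i^j$ and applying Lemma \ref{tklemma} to $\tilde A$ — that is, $\sum_{i,j}(T_{k-1})^i_j(\tilde A)\tilde A_j^i=k\sigma_k(\tilde\kappa)$ and $\sum_{i}(T_{k-1})^i_i(\tilde A)=(n+1-k)\sigma_{k-1}(\tilde\kappa)$ — this collapses to
\[
\operatorname{div}P=(n+1-k)\,\sigma_{k-1}(\tilde\kappa)\,(V-u)-k\,\sigma_k(\tilde\kappa)\,u .
\]

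Finally I would integrate over the closed hypersurface $\Sigma$; the left-hand side vanishes by the divergence theorem, leaving $(n+1-k)\int_\Sigma(V-u)\sigma_{k-1}(\tilde\kappa)\,d\mu=k\int_\Sigma u\,\sigma_k(\tilde\kappa)\,d\mu$. Passing to the normalized functions through $\sigma_j(\tilde\kappa)=\binom{n}{j}H_j(\tilde\kappa)$ and invoking the elementary identity $(n+1-k)\binom{n}{k-1}=k\binom{n}{k}$, the combinatorial prefactors cancel exactly and \eqref{mkf} follows. The only genuinely delicate point is the divergence-free property of $T_{k-1}(\tilde A)$; everything else is the conformal-field computation and bookkeeping with Lemma \ref{tklemma}. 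I expect the verification that shifting the shape operator by $-I$ preserves the Codazzi condition — hence the vanishing of $\operatorname{div}T_{k-1}(\tilde A)$ — to be the step most worth stating carefully, since it is precisely what forces the shifted elementary symmetric functions, rather than the ordinary ones, to appear in the formula.
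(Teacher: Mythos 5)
Your proof is correct and takes essentially the same route as the paper's: your identity $\nabla_i (X^\top)^j = V\delta_i^j - u h_i^j$ is exactly the paper's Hessian identity $\nabla_i\nabla_j V = Vg_{ij}-uh_{ij}$ (since $X^\top=\nabla V$), and contracting with $T_{k-1}(\tilde h)$, invoking the Codazzi/divergence-free property, and integrating is precisely the paper's integration by parts, merely packaged as the divergence of your vector field $P$. The normalization bookkeeping $(n+1-k)\binom{n}{k-1}=k\binom{n}{k}$ matches the paper's factor $kC_n^k$, so nothing is missing.
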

\begin{proof}
	It follows from the Gauss-Weingarten formula and (\ref{cvf}) that
	\begin{equation}\label{hv}
		\nabla_{i}\nabla_{j} V=\left\langle \bar{\nabla}_{i}(\lambda \partial_r), e_j\right\rangle - uh_{ij}=Vg_{ij}- uh_{ij}.
	\end{equation}
Denote $\tilde h=(h_j^i- \delta_j^i)_{n\times n}$ and note that $H_k(\tilde\kappa)=H_k(\tilde h)$.
	Multiplying (\ref{hv}) by the $k$th Newton transform tensor $\left(T_{k-1}\right)_i^j(\tilde{h})$ and summing over $i$, $j$, we obtain
	\begin{equation}\label{tkv}
		\begin{aligned}
			\sum_{i, j=1}^{n}\left(T_{k-1}\right)_i^j(\tilde{h}) \nabla^i \nabla_j V & =\sum_{i, j=1}^{n}\left(T_{k-1}\right)_i^j(\tilde{h})\left(V \delta_j^i-u h_j^i\right) \\
			& =\sum_{i, j=1}^{n}\left(T_{k-1}\right)_i^j(\tilde{h})\left[\left(V-u\right) \delta_j^i-u\left(h_j^i- \delta_j^i\right)\right] \\
			& =kC_n^k \left(\left(V-u\right) H_{k-1}(\tilde{\kappa})-u H_k(\tilde{\kappa})\right) .
		\end{aligned}
	\end{equation}
	where in the last equality we used (\ref{tk1}) and (\ref{tk2}). Notice that $\tilde{h}_{i j}=h_{i j}- g_{i j}$ is a Codazzi tensor, i.e., $\nabla_{\ell} \tilde{h}_{i j}$ is symmetric in $i, j, \ell$. It follows that $\left(T_{k-1}\right)_i^j(\tilde{h})$ is divergence free. The equation (\ref{mkf}) follows from integration by parts and the divergence free property of $\left(T_{k-1}\right)_i^j(\tilde{h})$.
\end{proof}
For later purpose to prove the rigidity result on weighted shifted curvature functions, we need to extend the above lemma to the following type.
\begin{lemma}\label{wmkl}
	Assume that $\chi(s)$ is a smooth function defined on $\mathbb{R}^{+}$. Let $\Sigma$ be a closed hypersurface in $\mathbb{H}^{n+1}$. We have
	\begin{equation}\label{wmk}
		\begin{aligned}
			\int_{\Sigma} \chi(V) u H_k(\tilde{\kappa}) d \mu= & \int_{\Sigma} \chi(V)(V-u) H_{k-1}(\tilde{\kappa}) d \mu  +  \frac{1}{k C_{n}^k} \int_{\Sigma} \chi'(V)\left(T_{k-1}\right)_{i}^{j}(\tilde{h})\nabla^{i}V \nabla_j V d \mu ,
		\end{aligned}
	\end{equation}
	where $\tilde h=(h_j^i- \delta_j^i)_{n\times n}$.
	Moreover, if $\chi$ is non-decreasing and $\tilde{\kappa}\in \overline{\Gamma_{k}^{+}}$, then we have
	\begin{equation}\label{wmki}
		\int_{\Sigma}\chi(V) u H_k(\tilde{\kappa}) d \mu \geq \int_{\Sigma} \chi(V)(V-u) H_{k-1}(\tilde{\kappa}) d \mu.
	\end{equation}
\end{lemma}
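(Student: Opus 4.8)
The plan is to promote the divergence identity behind Lemma \ref{mkl} to its weighted form by inserting the factor $\chi(V)$ inside the divergence and exploiting the divergence-free property of the Newton tensor already used in the proof of Lemma \ref{mkl}.

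First I would compute the divergence of the vector field $\chi(V)\,T_{k-1}(\tilde h)(\nabla V)$, i.e.\ the field with components $\chi(V)(T_{k-1})_i^j(\tilde h)\nabla^i V$. Since $\tilde h_{ij}=h_{ij}-g_{ij}$ is a Codazzi tensor, the tensor $(T_{k-1})_i^j(\tilde h)$ is divergence free (as recalled at the end of the proof of Lemma \ref{mkl}), so the product rule leaves only two surviving terms:
\[
\operatorname{div}\!\big(\chi(V)\,T_{k-1}(\tilde h)(\nabla V)\big) = \chi'(V)\,(T_{k-1})_i^j(\tilde h)\,\nabla^i V\,\nabla_j V + \chi(V)\,(T_{k-1})_i^j(\tilde h)\,\nabla^i\nabla_j V.
\]
Next I would substitute the Hessian identity (\ref{hv}), namely $\nabla_i\nabla_j V = V g_{ij}-u h_{ij}$, into the second term. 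This is exactly the contraction already performed in (\ref{tkv}): invoking the trace identities (\ref{tk1}) and (\ref{tk2}) gives
\[
(T_{k-1})_i^j(\tilde h)\,\nabla^i\nabla_j V = kC_n^k\big((V-u)H_{k-1}(\tilde\kappa)-uH_k(\tilde\kappa)\big).
\]

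Since $\Sigma$ is closed, integrating the left-hand divergence over $\Sigma$ gives zero by the divergence theorem. Combining the two displays above and rearranging, then dividing through by $kC_n^k$, yields (\ref{wmk}) precisely.

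Finally, for the inequality (\ref{wmki}) I would observe that $\chi$ being non-decreasing gives $\chi'(V)\geq 0$, while the hypothesis $\tilde\kappa\in\overline{\Gamma_k^+}$ forces the Newton tensor $T_{k-1}(\tilde h)$ to be positive semi-definite (the standard fact that $T_j\geq 0$ on $\overline{\Gamma_k^+}$ for $0\leq j\leq k-1$, obtained by continuity from its positive definiteness on the open cone $\Gamma_k^+$). Hence the quadratic form $(T_{k-1})_i^j(\tilde h)\nabla^i V\nabla_j V$ is pointwise nonnegative, so the last integrand in (\ref{wmk}) is nonnegative and may be dropped, giving (\ref{wmki}). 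The computation is essentially a weighted repetition of Lemma \ref{mkl}; the only point genuinely beyond that lemma is the positive semi-definiteness of $T_{k-1}(\tilde h)$ under the cone condition, which is where the assumptions $\chi$ non-decreasing and $\tilde\kappa\in\overline{\Gamma_k^+}$ are both used.
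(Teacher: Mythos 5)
Your proposal is correct and follows essentially the same route as the paper: the authors also multiply the pointwise identity (\ref{tkv}) by $\chi(V)$, integrate by parts using the divergence-free property of $\left(T_{k-1}\right)_i^j(\tilde h)$, and then invoke the semi-positive definiteness of the Newton tensor on $\overline{\Gamma_k^{+}}$ together with $\chi'\geq 0$ to pass to (\ref{wmki}). Your explicit computation of $\operatorname{div}\bigl(\chi(V)\,T_{k-1}(\tilde h)(\nabla V)\bigr)$ is just the spelled-out form of the paper's ``integrating by parts'' step, so there is no substantive difference.
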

\begin{proof}
	From (\ref{tkv}), we arrive at
	\begin{equation}
		\left(T_{k-1}\right)_i^j(\tilde{h}) \nabla^i \nabla_j V=kC_n^k \left(\left(V-u\right) H_{k-1}(\tilde{\kappa})-u H_k(\tilde{\kappa})\right).
	\end{equation}
	Multiplying above equation by the function $\chi(V)$ and integrating by parts, one obtains the desired result (\ref{wmk}). Under the assumption that $\tilde{\kappa}\in \overline{\Gamma_{k}^{+}}$, the $(k-1)$th Newton tensor $T_{k-1}$ is semi-positively definite (see e.g. Guan \cite{G02}), hence
	$$
	\left(T_{k-1}\right)_{i}^{j}(\tilde{h})\nabla^{i}V \nabla_j V \geq 0.
	$$
	Together with assumption $\chi' \geq 0$, (\ref{wmki}) holds.
\end{proof}
Finally, we need the Heintze-Karcher-type inequality due to Hu, Wei and Zhou \cite{HWZ23}.
\begin{proposition}[\cite{HWZ23}]\label{hkp}
	 Let $\Omega$ be a bounded domain with smooth boundary $\Sigma=\partial\Omega$ in hyperbolic space $\mathbb{H}^{n+1}$ $(n\geq 2)$. Fix a point $o\in \mathbb{H}^{n+1}$ and $V(x)=\cosh r(x)$, where $r(x)=d(o,x)$ is the distance to this point o. Assume that the mean curvature of $\Sigma=\partial \Omega$ satisfies $H>n$, then
	\begin{equation}\label{hkf}
		\int_{\Sigma} \frac{V-u}{H-n} d \mu \geq \frac{n+1}{n} \int_{\Omega} V d \mathrm{vol} ,
	\end{equation}
	where $u=\left\langle \bar{\nabla}V, \nu \right\rangle=\left\langle \sinh r\partial_{r}, \nu \right\rangle$ is the support function of $\Sigma$ and $\nu$ denotes the unit outward normal of $\Sigma$. Equality holds in (\ref{hkf}) if and only if $\Sigma$ is umbilic.
\end{proposition}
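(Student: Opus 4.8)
The plan is to prove (\ref{hkf}) by the normal--exponential--map method of Montiel--Ros and Brendle, exploiting two special features of the weight $V=\cosh r$. I would fix the inward normal $-\nu$ and set $\Phi(x,t)=\exp_x(-t\nu(x))$; for each $x\in\Sigma$ let $c(x)$ be the cut distance along $t\mapsto\Phi(x,t)$. Then $\Phi$ carries $Z=\{(x,t):0\le t\le c(x)\}$ onto $\overline{\Omega}$, since every interior point is joined to $\Sigma$ by a shortest segment meeting $\Sigma$ orthogonally, and $c(x)$ never exceeds the first focal distance, so the Jacobian stays positive up to the cut distance. Writing the volume element as $J(x,t)\,dt\,d\mu(x)$ and applying the area formula to this surjection (each fibre is covered at least once) gives
\[
\int_\Omega V\,d\mathrm{vol}\ \le\ \int_\Sigma\int_0^{c(x)}V(\Phi(x,t))\,J(x,t)\,dt\,d\mu(x).
\]

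Next I would compute the two ingredients explicitly. For the Jacobian, solving the Jacobi equation $A''=A$ in $\mathbb{H}^{n+1}$ with $A(0)=\mathrm{id}$ and $A'(0)=-S$, where $S=\mathrm{diag}(\kappa_i)$ is the Weingarten map, yields $J(x,t)=\det(\cosh t\,\mathrm{id}-\sinh t\,S)=\prod_{i=1}^n(\cosh t-\kappa_i\sinh t)$. For the weight, note $\bar\nabla V=\lambda\partial_r$, so (\ref{cvf}) gives $\bar\nabla^2V=V\bar g$ (this is the trace-free content of (\ref{hv})); hence along any geodesic $\gamma$ the function $\phi(t)=V(\gamma(t))$ satisfies $\phi''=\phi$, and with $\phi(0)=V$, $\phi'(0)=\langle\bar\nabla V,-\nu\rangle=-u$ I obtain the clean identity $V(\Phi(x,t))=V\cosh t-u\sinh t$. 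Tracing $\bar\nabla^2V=V\bar g$ also gives $\bar\Delta V=(n+1)V$, so by the divergence theorem $\int_\Omega V\,d\mathrm{vol}=\tfrac1{n+1}\int_\Sigma u\,d\mu$; this reduces (\ref{hkf}) to the cleaner equivalent statement
\[
\int_\Sigma\frac{V-u}{H-n}\,d\mu\ \ge\ \frac1n\int_\Sigma u\,d\mu,\qquad\text{i.e.}\qquad \int_\Sigma\frac{V-u}{H_1(\tilde\kappa)}\,d\mu\ \ge\ \int_\Sigma u\,d\mu,
\]
using $H-n=nH_1(\tilde\kappa)$.

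Combining the two identities, the inner integral is $\int_0^{c(x)}(V\cosh t-u\sinh t)\prod_{i=1}^n(\cosh t-\kappa_i\sinh t)\,dt$. Each factor is nonnegative on $[0,c(x)]$, and $V\cosh t-u\sinh t>0$ since $V>|u|$ (indeed $V^2-u^2\ge1$), so the arithmetic--geometric mean inequality bounds the product by $(\cosh t-\tfrac{H}{n}\sinh t)^n$ and reduces matters to a one-dimensional comparison whose extremal profile is the geodesic sphere (where $\kappa_i=\coth r$, $V\cosh t-u\sinh t=\cosh(r-t)$, and the bracket equals $\sinh(r-t)/\sinh r$). The argument is arranged so that, after summing the sharp form of this comparison over $\Sigma$ and invoking $\int_\Omega V=\tfrac1{n+1}\int_\Sigma u$ to absorb the boundary terms, one recovers exactly (\ref{hkf}).

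The hard part will be this last comparison, and it is subtler than it first appears: the target bound is genuinely global rather than pointwise. Indeed the pointwise ratio inequality $\tfrac{V-u}{H_1(\tilde\kappa)}\ge u$ can fail at individual points — for instance where the position data $(V,u)$ are those of a centered sphere but the mean curvature is different — so one cannot simply estimate the inner integral pointwise by $\tfrac{n}{n+1}\tfrac{V-u}{H-n}$ after extending the $t$-integration to the first focal radius; a crude application of AM--GM plus such an extension even overshoots. Closing the estimate therefore requires a Brendle-type sharp comparison along each normal geodesic — a monotonicity that becomes an equality precisely on umbilic slices — used together with the identity $\int_\Omega V=\tfrac1{n+1}\int_\Sigma u$. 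Finally, tracing equality back through the area formula forces $\Phi$ to be injective and the comparison to be saturated, so all $\kappa_i$ coincide and the normal geodesics focus at a single point; hence $\Sigma$ is totally umbilic, that is, a geodesic sphere, which is the equality case in (\ref{hkf}).
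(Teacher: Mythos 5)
You should know at the outset that the paper never proves Proposition \ref{hkp}: it is imported verbatim from \cite{HWZ23} (with the $n=1$ case credited to \cite{LX}), so your proposal has to stand entirely on its own. It does not, because the one step that carries the whole content of the theorem is missing. What you actually verify is the standard Montiel--Ros scaffolding, and that part is fine: surjectivity of the inward normal exponential map up to the cut time $c(x)$, the Jacobian $J(x,t)=\prod_{i=1}^n(\cosh t-\kappa_i\sinh t)$, the identity $V(\Phi(x,t))=V\cosh t-u\sinh t$, the reduction of (\ref{hkf}) to $\int_\Sigma (V-u)/H_1(\tilde\kappa)\,d\mu\ge\int_\Sigma u\,d\mu$ via $(n+1)\int_\Omega V\,d\mathrm{vol}=\int_\Sigma u\,d\mu$, and the correct observation that crude AM--GM plus extending the $t$-integral to the root of $\cosh t-\frac Hn\sinh t$ overshoots exactly where $V<uH/n$. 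All of this is routine. The theorem lives in the step you defer to ``a Brendle-type sharp comparison along each normal geodesic'': you never state that comparison as a precise inequality, never prove it, and never show how it is cut off at $c(x)$ and summed to give the sharp constant $\frac{n+1}{n}$. For comparison, in Brendle's proof of the unshifted inequality \cite{B13} this step is the monotonicity of $t\mapsto f(\Phi(x,t))\big/\bigl(H(t)\det A(x,t)^{1/n}\bigr)$ along each normal geodesic, where $A(t)=\cosh t\,\mathrm{id}-\sinh t\,S$ and $H(t)$ is the mean curvature of the parallel hypersurface; it is proved from the Riccati equation plus Cauchy--Schwarz, and the sharp constant then comes from $\frac{d}{dt}\det A=-H(t)\det A$. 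Producing and proving the shifted analogue --- for instance, using the clean identity $(V-u_t)(\Phi(x,t))=(V-u)(x)\,e^{t}$ one expects the relevant quantity to be $e^{-t}\bigl(H(t)-n\bigr)\det A(t)^{1/n}$ being non-decreasing, together with showing that $H>n$ propagates to the parallel hypersurfaces --- is precisely what \cite{HWZ23} supplies, and nothing in your write-up does it.

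There is also an internal inconsistency that matters. You assert the estimate is ``genuinely global rather than pointwise,'' yet the device you invoke to close it --- a comparison along each normal geodesic, saturated on umbilic slices --- is pointwise in $x\in\Sigma$. What your counterexample actually shows is that the \emph{crude} bound, which discards $c(x)$ and integrates out to the root of $\cosh t-\frac Hn\sinh t$, fails pointwise; it does not show that the honest bound $\int_0^{c(x)}(V\cosh t-u\sinh t)\prod_i(\cosh t-\kappa_i\sinh t)\,dt\le\frac1{n+1}\,\frac{V-u}{H/n-1}$ fails, and indeed in Brendle's setting the analogous pointwise bound is exactly what his monotonicity lemma delivers, the point being that $c(x)$ shrinks precisely where the crude argument overshoots. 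So the proposal misdiagnoses where the difficulty sits and then resolves it neither globally nor pointwise. Since your equality analysis (``tracing equality back through the area formula'') rests on the unproved comparison, it is likewise unsupported.
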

We remark that for the case of $n=1$ (the curve case), the inequality (\ref{hkf}) was proved by Li and Xu \cite{LX}.

\section{Proof of Theorem \ref{thm1.1}}\label{sec3}
After all the preparation work, we are ready to prove our main theorems. We start with the weighted shifted curvature and their quotients. The Minkowski-type inequalities and the Heintze-Karcher-type inequality play important roles.
\begin{proof}[Proof of Theorem \ref{thm1.1}]
(i) Fix $k \in\{1, \cdots, n\}$. Since $\chi(V)H_{k}(\tilde{\kappa})=c$ for some constant $c$, the condition $\chi > 0$ and Lemma \ref{elliptic} imply that $c > 0$, which in turn implies that $H_{k}(\tilde{\kappa}) > 0$ on $\Sigma$. Then by the result of G\r{a}rding \cite{G59}, we have $\tilde{\kappa} \in \Gamma_k^{+}$.

It follows from (\ref{wmki}) that
\begin{equation}\label{cvhk}
	\chi(V)H_{k}(\tilde{\kappa})\int_{\Sigma}u d \mu = 	\int_{\Sigma}\chi(V) u H_k(\tilde{\kappa}) d \mu \geq \int_{\Sigma} \chi(V)(V-u) H_{k-1}(\tilde{\kappa}) d \mu.
\end{equation}
By the Newton-Maclaurin inequality (\ref{nm2}), we have
$$
H_{k-1}(\tilde{\kappa}) \geq H_{k}(\tilde{\kappa})^{\frac{k-1}{k}},
$$
thus
\begin{equation}\label{icvvu}
	\begin{aligned}
		\int_{\Sigma} \chi(V)(V-u) H_{k-1}(\tilde{\kappa}) d \mu & \geq  \int_{\Sigma} \chi(V)(V-u) H_{k}(\tilde{\kappa})^{\frac{k-1}{k}} d \mu  \\
		& = \left(\chi(V)H_{k}(\tilde{\kappa})\right)^{\frac{k-1}{k}} \int_{\Sigma} \chi(V)^{\frac{1}{k}}(V-u) d \mu.
	\end{aligned}
\end{equation}
Hence (\ref{cvhk}) and (\ref{icvvu}) imply
\begin{equation}\label{iu}
	\int_{\Sigma}u d \mu \geq \left(\chi(V)H_{k}(\tilde{\kappa})\right)^{-\frac{1}{k}} \int_{\Sigma} \chi(V)^{\frac{1}{k}}(V-u)  d \mu,
\end{equation}
and equality holds in and only if $\Sigma$ is a geodesic sphere. On the other hand, applying Proposition \ref{hkp} and the Newton-Maclaurin inequality (\ref{nm2}) we derive that
\begin{equation}\label{iu2}
	\begin{aligned}
		\int_{\Sigma}u d \mu & = (n+1)\int_{\Omega}V d \mathrm{vol} \leq \int_{\Sigma} \frac{V-u}{H_1(\tilde{\kappa})} d \mu \leq \int_{\Sigma} \frac{V-u}{H_k(\tilde{\kappa})^{\frac{1}{k}}} d \mu \\ & = \left(\chi(V)H_{k}(\tilde{\kappa})\right)^{-\frac{1}{k}} \int_{\Sigma} \chi(V)^{\frac{1}{k}}(V-u)  d \mu.
	\end{aligned}
\end{equation}
Finally combining (\ref{iu}) and (\ref{iu2}) together, we conclude that the equality holds in the Newton-MacLaurin inequality (\ref{nm2}), so $\Sigma$ is totally umbilical and then is a geodesic sphere. Moreover, suppose $\chi' > 0$. Since $\Sigma$ is totally umbilical and $\chi(V)H_{k}(\tilde{\kappa})$ is a constant, we have $V$ is a constant. Therefore the distance of each point in $\Sigma$ to the origin is a constant. So we conclude that $\Sigma$ is a centered geodesic sphere.

(ii) The first step is more or less the same as above. Lemma \ref{elliptic} and $\chi > 0$ imply that the ratio $\chi(V) \frac{H_k(\tilde{\kappa})}{H_l(\tilde{\kappa})}$ is a positive constant. Since by
the assumption $H_l(\tilde{\kappa})$ dose not vanish on $\Sigma$, $H_l(\tilde{\kappa})$ and $H_k(\tilde{\kappa})$ are positive everywhere in $\Sigma$. From \cite{G59}, we know that $\tilde{\kappa} \in \Gamma_k^{+}$.

If $l=0$, it is reduced to the case of Theorem \ref{thm1.1}(i). In the following, we consider the case $l \geq 1$. Denote the positive constant by $c$, namely,
$$
c := \chi(V) \frac{H_k(\tilde{\kappa})}{H_l(\tilde{\kappa})} > 0.
$$
Applying the Newton-Maclaurin inequality (\ref{nm1}), we note that
$$
\frac{H_k(\tilde{\kappa})}{H_{k-1}(\tilde{\kappa})} \leq \frac{H_l(\tilde{\kappa})}{H_{l-1}(\tilde{\kappa})}
$$
which yields
\begin{equation}\label{cformula1}
	\chi(V)\frac{H_{k-1}(\tilde{\kappa})}{H_{l-1}(\tilde{\kappa})} \geq c .
\end{equation}
It follows from (\ref{wmki}) and (\ref{mkf}) that
$$
\int_{\Sigma} \chi(V)(V-u) H_{k-1}(\tilde{\kappa}) d\mu \leq \int_{\Sigma}\chi(V) u H_k(\tilde{\kappa}) d \mu=c \int_{\Sigma} u H_l(\tilde{\kappa}) d \mu =c \int_{\Sigma}\left( V- u \right) H_{l-1}(\tilde{\kappa}) d \mu.
$$
This gives
$$
\int_{\Sigma} \left( V- u \right) \left(\chi(V) H_{k-1}(\tilde{\kappa})-c H_{l-1}(\tilde{\kappa})\right) d \mu\leq 0 .
$$
Note that $V-u>0$. The above together with (\ref{cformula1}) imply
$$
\chi(V)\frac{H_{k-1}(\tilde{\kappa})}{H_{l-1}(\tilde{\kappa})} = c,
$$
everywhere in $\Sigma$. By an iteration argument, we have
$$
\chi(V)\frac{H_{k-l}(\tilde{\kappa})}{H_{0}(\tilde{\kappa})}=\chi(V) H_{k-l}(\tilde{\kappa})=c,
$$
everywhere in $\Sigma$. Finally, from Theorem \ref{thm1.1}(i), we complete the proof.
\end{proof}
For the similar rigidity problem with weight $V-u$, the horoconvexity is necessary. That is the following theorem which is included in \cite[Proposition 8.1]{LX} and \cite[Theorem 1.3]{HWZ23}. For the completeness, we involve the proof here.
\begin{theorem}\label{urhc}
	Assume that $\chi(s)$ is a smooth, positive and monotone non-decreasing function defined on $\mathbb{R}^{+}$. Let $\Omega$ be a smooth bounded and uniformly h-convex $(\kappa_{i} \geq 1+\epsilon$, for some constant $\epsilon >0)$ domain in $\mathbb{H}^{n+1}$. If $\Sigma=\partial \Omega$ satisfies
	\begin{equation}
		\frac{H_k(\tilde{\kappa})}{H_l(\tilde{\kappa})}=\chi (V-u),
	\end{equation}
	where $0\leq l< k\leq n$ and $H_l(\tilde{\kappa})$ dose not vanish on $\Sigma$,
	then $\Sigma$ is a geodesic sphere. Moreover, if $\chi$ is strictly increasing, then $\Sigma$ is a centered geodesic sphere.
\end{theorem}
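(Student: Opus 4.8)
The plan is to run the Minkowski--Heintze-Karcher pinching scheme of Theorem \ref{thm1.1}, but with the weight $\chi(V-u)$ in place of $\chi(V)$, which forces me first to establish the correct weighted Minkowski formula. Uniform h-convexity gives the standing facts I would record at the outset: since $\kappa_i\geq 1+\epsilon$ we have $\tilde\kappa\in\Gamma_n^+$, so every $H_j(\tilde\kappa)>0$ (in particular $H_l(\tilde\kappa)>0$ automatically and $H_k/H_l=\chi(V-u)$ is a positive function), the mean curvature satisfies $H=n+nH_1(\tilde\kappa)>n$ so that Proposition \ref{hkp} applies, and $V-u=\cosh r-\sinh r\langle\partial_r,\nu\rangle\geq e^{-r}>0$. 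The key preliminary computation is $\nabla_i u=h_i^j\nabla_j V$, whence $\nabla_i(V-u)=-\tilde h_i^j\nabla_j V$ with $\tilde h=h-\mathrm{Id}$. Multiplying (\ref{tkv}) by a weight $F(V-u)$ and integrating by parts, the boundary term becomes $\int_\Sigma F'(V-u)\,(T_{m-1})_i^j(\tilde h)\,\tilde h^i_s\nabla^s V\,\nabla_j V\,d\mu$, which is a nonnegative quantity times the sign of $F'$ because $\tilde\kappa_i>0$ and $T_{m-1}$ is positive semidefinite on $\Gamma_m^+$. This produces a weighted Minkowski inequality for $F(V-u)$: for $F$ non-decreasing, $\int_\Sigma F(V-u)(V-u)H_{m-1}(\tilde\kappa)\,d\mu\geq\int_\Sigma F(V-u)\,u\,H_m(\tilde\kappa)\,d\mu$, and the reverse for $F$ non-increasing. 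The sign here is \emph{opposite} to Lemma \ref{wmkl}, a reflection of the factor $-\tilde h$ in $\nabla(V-u)$.

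For $l\geq 1$ the argument needs no Heintze-Karcher and rests on two Minkowski identities. From the unweighted formula (\ref{mkf}) with index $k$ and the constraint $H_k=\chi(V-u)H_l$, I get $\int_\Sigma (V-u)H_{k-1}\,d\mu=\int_\Sigma \chi(V-u)\,u\,H_l\,d\mu$, and the weighted inequality above with $F=\chi$ (non-decreasing) and index $l$ bounds the right side by $\int_\Sigma \chi(V-u)(V-u)H_{l-1}\,d\mu$. On the other hand (\ref{nm1}) gives $H_{k-1}/H_{l-1}\geq H_k/H_l=\chi(V-u)$ pointwise, so $\int_\Sigma(V-u)H_{k-1}\,d\mu\geq\int_\Sigma \chi(V-u)(V-u)H_{l-1}\,d\mu$. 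These squeeze $\int_\Sigma(V-u)H_{k-1}\,d\mu$ between equal quantities; since $V-u>0$, the pointwise inequality $H_{k-1}\geq\chi(V-u)H_{l-1}$ must be an equality, and the equality case of (\ref{nm1}) forces $\tilde\kappa=c(1,\dots,1)$, i.e. $\Sigma$ is totally umbilic.

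The case $l=0$, where the constraint is $H_k(\tilde\kappa)=\chi(V-u)$, is the main obstacle: only one Minkowski identity is available and the weight is genuinely non-constant, so the constant-$c$ pinch of Theorem \ref{thm1.1} is unavailable. The remedy is to bring in (\ref{hkf}) and to choose the weight so that the powers of $\chi$ align. Applying the weighted inequality with the \emph{non-increasing} weight $F=\chi(V-u)^{-1}$ and index $k$ gives $\int_\Sigma u\,d\mu=\int_\Sigma \chi(V-u)^{-1}u\,H_k\,d\mu\geq\int_\Sigma\chi(V-u)^{-1}(V-u)H_{k-1}\,d\mu$, and then (\ref{nm2}) in the form $H_{k-1}\geq H_k^{(k-1)/k}=\chi(V-u)^{(k-1)/k}$ yields $\int_\Sigma u\,d\mu\geq\int_\Sigma(V-u)\chi(V-u)^{-1/k}\,d\mu$. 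Conversely, combining (\ref{hkf}) (using $H-n=nH_1(\tilde\kappa)$ and $\int_\Sigma u\,d\mu=(n+1)\int_\Omega V\,d\mathrm{vol}$ as in (\ref{iu2})) with (\ref{nm2}) in the form $H_1\geq H_k^{1/k}=\chi(V-u)^{1/k}$ gives $\int_\Sigma u\,d\mu\leq\int_\Sigma\tfrac{V-u}{H_1}\,d\mu\leq\int_\Sigma(V-u)\chi(V-u)^{-1/k}\,d\mu$. The two bounds coincide, forcing equality in (\ref{hkf}), so $\Sigma$ is umbilic.

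In both cases $\Sigma$ is totally umbilic and hence a geodesic sphere. For the final assertion, suppose $\chi$ is strictly increasing: on a geodesic sphere all $\tilde\kappa_i$ equal one positive constant, so $H_k/H_l$ is constant, hence $\chi(V-u)$ is constant, and injectivity of $\chi$ makes $V-u$ constant on $\Sigma$. Then $\nabla_i(V-u)=-\tilde h_i^j\nabla_j V=-\tilde\kappa\,\nabla_i V$ with $\tilde\kappa\neq 0$ forces $\nabla V\equiv 0$, so $r$ is constant along $\Sigma$ and $\Sigma$ is a centered geodesic sphere. The one delicate point I expect to spend effort on is the $l=0$ pinch, namely recognizing $F=\chi^{-1}$ as the weight that makes the weighted-Minkowski estimate produce exactly the $\chi^{-1/k}$ that appears on the Heintze-Karcher side.
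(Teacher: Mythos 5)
Your proposal is correct, and for $l\geq 1$ it is essentially the paper's own argument: the same squeeze combining the unweighted Minkowski formula (\ref{mkf}) at index $k$, the weighted inequality with weight $\chi(V-u)$ (the paper's (\ref{wmki1}), whose reversed sign relative to Lemma \ref{wmkl} you correctly trace to $\nabla_i(V-u)=-\tilde h_i^j\nabla_j V$), and the Newton--Maclaurin bound $H_{k-1}/H_{l-1}\geq \chi(V-u)$. The differences lie at the two endpoints of the argument. First, once the pointwise identity $H_{k-1}=\chi(V-u)H_{l-1}$ is forced, you conclude umbilicity directly from the equality case of (\ref{nm1}), whereas the paper iterates the quotient identity down to $H_{k-l}(\tilde\kappa)=\chi(V-u)$ and reduces to the case $l=0$; both routes are valid. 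Second, and more substantively, the paper handles the base case $l=0$ by \emph{citation} (``it has been proved in \cite{HWZ23}''), giving no proof, while you prove it within the paper's own toolkit: choosing the \emph{non-increasing} weight $F=1/\chi$ in the weighted Minkowski inequality so that, after applying (\ref{nm2}) in the forms $H_{k-1}\geq \chi(V-u)^{(k-1)/k}$ and $H_1\geq \chi(V-u)^{1/k}$, both sides of the Heintze--Karcher inequality (\ref{hkf}) are pinched against the same quantity $\int_\Sigma (V-u)\chi(V-u)^{-1/k}\,d\mu$ — a clean adaptation of the proof of Theorem \ref{thm1.1}(i). Your argument for the ``centered'' conclusion (constancy of $H_k/H_l$ on a geodesic sphere, injectivity of $\chi$ giving $V-u$ constant, then invertibility of $\tilde h$ forcing $\nabla V\equiv 0$) is likewise self-contained, where the paper again inherits this from the cited result. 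In short: same mechanism where the paper argues, plus a genuine proof where the paper defers to \cite{HWZ23}, making your version strictly more self-contained; all the positivity facts you rely on ($T_{m-1}(\tilde h)\tilde h\geq 0$, $V-u>0$, $H>n$) are indeed guaranteed by the uniform h-convexity hypothesis.
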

\begin{proof}
	As the hypersurface $\Sigma$ is uniformly h-convex, we have $\tilde{\kappa_{i}} >0$, $H_k(\tilde{\kappa}) >0$ and $\frac{\partial H_k(\tilde{\kappa})}{\partial \tilde{\kappa_{i}}} >0$. For the case of $l=0$, it has been proved in \cite{HWZ23}. In the following, we only focus on the case $l \geq 1$.
	
	Applying the Newton-Maclaurin inequality (\ref{nm1}), we note that
	$$
	\frac{H_k(\tilde{\kappa})}{H_{k-1}(\tilde{\kappa})} \leq \frac{H_l(\tilde{\kappa})}{H_{l-1}(\tilde{\kappa})}
	$$
	which implies
	\begin{equation}\label{cformula2}
		\frac{H_{k-1}(\tilde{\kappa})}{H_{l-1}(\tilde{\kappa})} \geq \chi (V-u) .
	\end{equation}
	Similar to the proof of (\ref{wmki}), one can prove that
	\begin{equation}\label{wmki1}
			\int_{\Sigma}\chi(V-u) u H_k(\tilde{\kappa}) d \mu \leq \int_{\Sigma} \chi(V-u)(V-u) H_{k-1}(\tilde{\kappa}) d \mu,
	\end{equation}
	where we used $\nabla_{i} (V-u)=-(h_i^j- \delta_i^j)\nabla_{j}V$. It follows from (\ref{wmki1}) and (\ref{mkf}) that
	$$
	\int_{\Sigma} \chi(V-u)(V-u) H_{l-1}(\tilde{\kappa}) d\mu \geq \int_{\Sigma}\chi(V-u) u H_l(\tilde{\kappa}) d \mu= \int_{\Sigma} u H_k(\tilde{\kappa}) d \mu = \int_{\Sigma}\left( V- u \right) H_{k-1}(\tilde{\kappa}) d \mu.
	$$
	This yields
	$$
	\int_{\Sigma} \left( V- u \right) \left(\chi(V-u) H_{l-1}(\tilde{\kappa})- H_{k-1}(\tilde{\kappa})\right) d \mu\geq 0 .
	$$
	Note that $V-u >0$. The above together with (\ref{cformula2}) imply
	$$
	\frac{H_{k-1}(\tilde{\kappa})}{H_{l-1}(\tilde{\kappa})}=\chi (V-u),
	$$
	everywhere in $\Sigma$. By an iteration argument, one obtains
	$$
	\frac{H_{k-l}(\tilde{\kappa})}{H_{0}(\tilde{\kappa})}= H_{k-l}(\tilde{\kappa})=\chi(V-u),
	$$
	everywhere in $\Sigma$. It reduces to the case of Hu-Wei-Zhou's result \cite{HWZ23}. We complete the proof.
\end{proof}

\section{Proof of Theorem \ref{thm1.3}}\label{sec4}
Next, we show the rigidity result for constant linear combinations of weighted shifted mean curvatures in the hyperbolic space. This argument needs pay more attention to the use of the Newton-Maclaurin inequality at the first step.
\begin{proof}[Proof of Theorem \ref{thm1.3}]
	(i) By Lemma \ref{elliptic} and non-vanishing of at least one coefficient, we know that $\sum_{i=1}^{l-1}a_i H_i (\tilde{\kappa}) > 0$. Since $\tilde{\kappa}\in\overline{\Gamma_{k}^+}$, we recall from (\ref{nm1}) that
	\begin{equation}\label{nmij}
		H_{i}(\tilde{\kappa})H_{j-1}(\tilde{\kappa}) \geq H_{i-1}(\tilde{\kappa})H_{j}(\tilde{\kappa}), \quad 1\leq i < j \leq k,
	\end{equation}
	where all equalities hold if and only if $\Sigma$ is umbilical. Multiplying (\ref{nmij}) by $a_i$, $b_j$ and $\chi$ and summing over $i$ and $j$, we obtain
	\begin{equation}\label{snmij}
		\sum_{i=1}^{l-1} a_i H_i (\tilde{\kappa}) \sum_{j=l}^k b_j(\chi(V)H_{j-1}(\tilde{\kappa}))  \geq \sum_{i=1}^{l-1} a_i H_{i-1}(\tilde{\kappa}) \sum_{j=l}^k b_j(\chi(V)H_{j}(\tilde{\kappa})).
	\end{equation}
	By the assumption
	$$
	\sum_{i=1}^{l-1} a_i H_i (\tilde{\kappa}) =\sum_{j=l}^k b_j (\chi(V)H_{j}(\tilde{\kappa})) >0, \quad 2 \leq l<k \leq n,
	$$
	we infer from (\ref{snmij}) that
	\begin{equation}\label{snmij1}
		\sum_{j=l}^k b_j (\chi(V)H_{j-1}(\tilde{\kappa})) \geq \sum_{i=1}^{l-1} a_i H_{i-1}(\tilde{\kappa}).
	\end{equation}
	On the other hand, we obtain from (\ref{wmki}) that
	\begin{equation}
		\begin{aligned}
				0 & =\int_{\Sigma} \left(\sum_{j=l}^k b_j (\chi(V)H_{j}(\tilde{\kappa})) -\sum_{i=1}^{l-1} a_i H_i (\tilde{\kappa}) \right) u d\mu \\ & \geq \int_{\Sigma}  \left(\sum_{j=l}^k b_j (\chi(V)H_{j-1}(\tilde{\kappa}))-\sum_{i=1}^{l-1} a_i H_{i-1} (\tilde{\kappa}) \right) (V-u) d\mu \geq 0.
		\end{aligned}
	\end{equation}
    Here, the last inequality follows from (\ref{snmij1}) and $V-u>0$. We conclude that the equality holds in the Newton-MacLaurin inequality (\ref{nm1}), which implies that $\Sigma$ is totally umbilical and thus a geodesic sphere.

    (ii) Making use of (\ref{nm1}) and (\ref{wmki}), we derive
    $$
    \begin{aligned}
    	a_{0} \int_{\Sigma} u d\mu & = \int_{\Sigma} u \left(\sum_{j=1}^{k}b_j \chi(V) H_j (\tilde{\kappa})\right) d\mu \geq \int_{\Sigma} (V-u) \left(\sum_{j=1}^{k}b_j \chi(V) H_{j-1} (\tilde{\kappa})\right) \frac{H_{1}(\tilde{\kappa})}{H_{1}(\tilde{\kappa})} d\mu \\
    	& \geq \int_{\Sigma} (V-u) \left(\sum_{j=1}^{k}b_j \chi(V) H_{j} (\tilde{\kappa})\right) \frac{1}{H_{1}(\tilde{\kappa})} d\mu = a_{0} \int_{\Sigma} \frac{V-u}{H_{1}(\tilde{\kappa})} d\mu \\
    	& \geq (n+1) a_{0}\int_{\Omega} V d\mathrm{vol} = a_{0} \int_{\Sigma} u d\mu,
    \end{aligned}
    $$
    where in the last inequality we used Proposition \ref{hkp}. Therefore, the equality in both case yields that $\Sigma$ is a geodesic sphere.

    (iii) The proof is essentially the same as above. One only needs to notice the slight difference regarding the value of indices. By Lemma \ref{elliptic} and non-vanishing of at least one coefficient, we have $\sum_{i=0}^{l-1}a_i H_i (\tilde{\kappa}) > 0$. Since $\tilde{\kappa}\in\overline{\Gamma_{k}^+}$, we recall from (\ref{nm1}) that
    \begin{equation}\label{nmij11}
    	H_{i}(\tilde{\kappa})H_{j+1}(\tilde{\kappa}) \leq H_{i+1}(\tilde{\kappa})H_{j}(\tilde{\kappa}), \quad 0\leq i < j \leq k,
    \end{equation}
    where all equalities hold if and only if $\Sigma$ is umbilical. Multiplying (\ref{nmij11}) by $a_i$, $b_j$ and $\chi$ and summing over $i$ and $j$, we get
    \begin{equation}\label{snmij11}
    	\sum_{i=0}^{l-1} a_i H_i (\tilde{\kappa}) \sum_{j=l}^k b_j (\chi(V)H_{j+1}(\tilde{\kappa})) \leq \sum_{i=0}^{l-1} a_i H_{i+1}(\tilde{\kappa}) \sum_{j=l}^k b_j (\chi(V)H_{j}(\tilde{\kappa})).
    \end{equation}
    Using the assumption
    $$
    \sum_{i=0}^{l-1} a_i H_i (\tilde{\kappa}) =\sum_{j=l}^k b_j (\chi(V)H_{j}(\tilde{\kappa})) >0, \quad 1 \leq l<k \leq n-1,
    $$
    we obtain from (\ref{snmij11}) that
    \begin{equation}\label{snmij111}
    	\sum_{j=l}^k b_j (\chi(V)H_{j+1}(\tilde{\kappa})) \leq \sum_{i=0}^{l-1} a_i H_{i+1}(\tilde{\kappa}).
    \end{equation}
    Applying (\ref{mkf}) and (\ref{wmki}) again,
    \begin{equation}
    	\begin{aligned}
    		0 & =\int_{\Sigma} \left(\sum_{i=0}^{l-1} a_i H_i (\tilde{\kappa})-\sum_{j=l}^k b_j (\chi(V)H_{j}(\tilde{\kappa})) \right) (V-u) d\mu \\ & \geq \int_{\Sigma}  \left(\sum_{i=0}^{l-1} a_i H_{i+1}(\tilde{\kappa})-\sum_{j=l}^k b_j H_{j+1}(\tilde{\kappa}) \right) u d\mu \geq 0.
    	\end{aligned}
    \end{equation}
    Here, the last inequality follows from (\ref{spf}) and (\ref{snmij111}). We finish the proof by examining the equality case as before.
\end{proof}

In a similar way, one can also prove the rigidity result for the weight $V-u$. We only state the result here and leave the proof to readers.
\begin{theorem}\label{cvu}
	Let $n\geq2$. Assume that $\chi(s)$ is a smooth, positive and monotone non-decreasing function defined on $\mathbb{R}^{+}$. Let $\Sigma$ be a closed uniformly h-convex hypersurface in $\mathbb{H}^{n+1}$. If either of the following holds:
	\begin{itemize}
		\item[(i)] $2\leq l<k\leq n$ and there are nonnegative constants $\{a_i\}_{i=1}^{l-1}$ and $\{b_j\}_{j=l}^{k}$, at least one of them not vanishing, such that
		\begin{equation*}
			\sum_{i=1}^{l-1}a_i (\chi(V-u)H_i (\tilde{\kappa}))=\sum_{j=l}^{k}b_j H_j (\tilde{\kappa});
		\end{equation*}
		\item[(ii)] $\Sigma$ is star-shaped, $1\leq l<k\leq n-1$ and there are nonnegative constants $\{a_i\}_{i=0}^{l-1}$ and $\{b_j\}_{j=l}^{k}$, at least one of them not vanishing, such that
		\begin{equation*}
			\sum_{i=0}^{l-1}a_i (\chi(V-u)H_i (\tilde{\kappa}))=\sum_{j=l}^{k}b_j H_j (\tilde{\kappa}),
		\end{equation*}
	\end{itemize}
	then $\Sigma$ is a geodesic sphere. Moreover, if $\chi$ is strictly increasing, then $\Sigma$ is a centered geodesic sphere.
\end{theorem}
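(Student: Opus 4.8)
The plan is to run the argument of Theorem \ref{thm1.3} almost verbatim, but with one decisive modification: every use of the weighted Minkowski inequality (\ref{wmki}) (weight $\chi(V)$) is replaced by its $(V-u)$-weighted counterpart (\ref{wmki1}), established in the proof of Theorem \ref{urhc}. The only structural subtlety is that, because $\nabla_i(V-u)=-(h_i^j-\delta_i^j)\nabla_j V$, the integration by parts that produced (\ref{wmki}) now produces the \emph{reversed} inequality: for each index $i$,
\[
\int_{\Sigma}\chi(V-u)\,u\,H_i(\tilde{\kappa})\,d\mu\;\le\;\int_{\Sigma}\chi(V-u)(V-u)H_{i-1}(\tilde{\kappa})\,d\mu ,
\]
valid when $\chi'\ge 0$ and $\tilde{\kappa}\in\overline{\Gamma_k^+}$. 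This sign reversal is exactly why the weight $\chi(V-u)$ must sit on the lower-order family $\{a_i\}$ rather than on $\{b_j\}$: it is what makes the Newton--Maclaurin inequality and the Minkowski inequality point the same way so that the final sandwich closes. Throughout I would exploit that uniform h-convexity gives $\tilde{\kappa}_i\ge\epsilon>0$, hence $\tilde{\kappa}\in\Gamma_n^+$, so that every $H_j(\tilde{\kappa})>0$, the Newton tensor $(T_{k-1})_i^j(\tilde h)$ is positive definite, and the support function satisfies $u>0$ (and therefore $V-u>0$).

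For part (i), since at least one coefficient is nonzero and all $H_j(\tilde{\kappa})>0$, both sides of the hypothesis equal a positive function $P:=\sum_{i=1}^{l-1}a_i\chi(V-u)H_i(\tilde{\kappa})=\sum_{j=l}^{k}b_jH_j(\tilde{\kappa})>0$, and a short check shows at least one $a_i>0$ and at least one $b_j>0$. Multiplying the Newton--Maclaurin inequality (\ref{nmij}), $H_iH_{j-1}\ge H_{i-1}H_j$ for $i<j$, by $a_i\chi(V-u)\ge0$ and $b_j\ge0$ and summing gives
\[
\Big(\sum_{i=1}^{l-1} a_i\chi(V-u)H_i\Big)\Big(\sum_{j=l}^{k} b_jH_{j-1}\Big)\ge\Big(\sum_{i=1}^{l-1} a_i\chi(V-u)H_{i-1}\Big)\Big(\sum_{j=l}^{k} b_jH_j\Big),
\]
and dividing by $P>0$ yields the pointwise inequality $\sum_j b_jH_{j-1}\ge\sum_i a_i\chi(V-u)H_{i-1}$. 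Next I integrate: (\ref{mkf}) gives $\int_\Sigma(\sum_j b_jH_j)u=\int_\Sigma(\sum_j b_jH_{j-1})(V-u)$, while the displayed (\ref{wmki1}) gives $\int_\Sigma(\sum_i a_i\chi H_i)u\le\int_\Sigma(\sum_i a_i\chi H_{i-1})(V-u)$. Because the two $u$-integrands $\sum_j b_jH_j$ and $\sum_i a_i\chi H_i$ coincide, equating the $u$-integrals chains these into
\[
\int_{\Sigma}\Big(\sum_{j=l}^{k} b_jH_{j-1}-\sum_{i=1}^{l-1} a_i\chi(V-u)H_{i-1}\Big)(V-u)\,d\mu\le 0 .
\]
The integrand is pointwise $\ge0$ and $V-u>0$, so the integral is also $\ge0$, hence it vanishes. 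This forces equality in the divided Newton--Maclaurin step, and since some product $a_i\chi b_j>0$, the equality clause of (\ref{nm1}) makes $\Sigma$ totally umbilic, i.e. a geodesic sphere.

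Part (ii) is the ``index-up'' mirror: using (\ref{nmij11}), $H_iH_{j+1}\le H_{i+1}H_j$, and dividing by $P>0$ one obtains the pointwise bound $\sum_j b_jH_{j+1}\le\sum_i a_i\chi(V-u)H_{i+1}$; combining (\ref{mkf}) on the $b_j$-terms with the shifted form $\int_\Sigma\chi(V-u)(V-u)H_i\ge\int_\Sigma\chi(V-u)u\,H_{i+1}$ of (\ref{wmki1}) on the $a_i$-terms, and using $u>0$, sandwiches a $u$-weighted integral to zero and again forces umbilicity. For the final assertion, when $\chi'>0$ I would track the equality case of (\ref{wmki1}): its derivation shows the defect between the two sides is a nonnegative integral whose integrand is $\chi'(V-u)$ times a quadratic form in $\nabla V$ built from $(T_{k-1})_i^j(\tilde h)$ and $\tilde h$, which is positive definite under uniform h-convexity. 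The computation above shows this defect is zero, so with $\chi'>0$ we get $\nabla V\equiv0$; hence $r$ is constant and $\Sigma$ is a centered geodesic sphere. The only real obstacle is the bookkeeping of signs and inequality directions: one must check that the reversal in (\ref{wmki1}) is exactly compensated by attaching $\chi(V-u)$ to the $\{a_i\}$ side, and that both the Newton--Maclaurin step and the Minkowski step are simultaneously forced to equality so that the rigidity and the ``centered'' conclusions follow.
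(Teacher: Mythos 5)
Your proposal is correct and takes exactly the route the paper intends: the paper states Theorem \ref{cvu} without proof, remarking only that it follows ``in a similar way'' from Theorem \ref{thm1.3} with the weight $V-u$, i.e.\ with every use of (\ref{wmki}) replaced by the reversed inequality (\ref{wmki1}) from the proof of Theorem \ref{urhc}, which is precisely your plan, including the observation that the sign reversal is compensated by the weight sitting on the lower-order family $\{a_i\}$. One minor caveat: (\ref{wmki1}) is \emph{not} valid under $\tilde{\kappa}\in\overline{\Gamma_{k}^{+}}$ alone as you parenthetically claim, since its boundary term is the quadratic form of $(T_{k-1})(\tilde{h})\,\tilde{h}$, whose eigenvalues $\tilde{\kappa}_{p}\,\partial\sigma_{k}/\partial\tilde{\kappa}_{p}$ are nonnegative only when all $\tilde{\kappa}_{p}\geq 0$; but since the theorem assumes uniform h-convexity and you invoke it throughout, this imprecision does not affect your argument.
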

\section{Proof of Theorems \ref{thm+1}--\ref{thm+3}}\label{sec+}
In this section, we use the similar idea as in the work of Kwong-Lee-Pyo \cite{KLP18}. The following formulas will play an essential role in the proof.
\begin{lemma}\label{pml}
	Let $\phi$ be a smooth function on a closed hypersurface $\Sigma$ in $\mathbb{H}^{n+1}$. We have
	\begin{equation}\label{pmk}
		\begin{aligned}
			\int_{\Sigma} \phi u H_k(\tilde{\kappa}) d \mu= & \int_{\Sigma} \phi(V-u) H_{k-1}(\tilde{\kappa}) d \mu  +  \frac{1}{k C_{n}^k} \int_{\Sigma} \left(T_{k-1}\right)_{i}^{j}(\tilde{h})\nabla^{i}\phi \nabla_j V d \mu ,
		\end{aligned}
	\end{equation}
	where $\widetilde{h_j^i}=h_j^i- \delta_j^i$.
\end{lemma}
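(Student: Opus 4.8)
The plan is to mimic the proof of Lemma \ref{wmkl} with the weight $\chi(V)$ replaced by the arbitrary smooth function $\phi$. The essential input is the pointwise identity already isolated in the proof of Lemma \ref{mkl}, namely
\begin{equation*}
    \left(T_{k-1}\right)_i^j(\tilde{h})\,\nabla^i \nabla_j V = kC_n^k\left((V-u)H_{k-1}(\tilde{\kappa}) - uH_k(\tilde{\kappa})\right),
\end{equation*}
which is obtained by contracting the Hessian formula (\ref{hv}) with the Newton tensor and applying the trace identities (\ref{tk1})--(\ref{tk2}).

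First I would multiply this identity by $\phi$ and integrate over the closed hypersurface $\Sigma$. On the right-hand side this directly gives $kC_n^k$ times the combination of curvature integrals appearing in (\ref{pmk}). The left-hand side I would then integrate by parts, moving one derivative off $V$ and onto $\phi$. The key structural fact---already used in Lemma \ref{mkl}---is that the Newton tensor $\left(T_{k-1}\right)_i^j(\tilde{h})$ is divergence free, since $\tilde{h}_{ij}=h_{ij}-g_{ij}$ is a Codazzi tensor. Applying the product rule to $\nabla^i\!\left(\phi\left(T_{k-1}\right)_i^j(\tilde{h})\nabla_j V\right)$, the divergence-free property annihilates the term where the derivative lands on the tensor, so only $\left(T_{k-1}\right)_i^j(\tilde{h})\nabla^i\phi\,\nabla_j V$ survives alongside the Hessian term; integration over the closed $\Sigma$ kills the total divergence. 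Rearranging and dividing by $kC_n^k$ then produces exactly (\ref{pmk}), the sign and normalization working out precisely because the boundary term vanishes.

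I do not anticipate a genuine obstacle: the computation is identical to that of Lemma \ref{wmkl}, the only difference being that for a general $\phi$ the integration by parts yields $\nabla^i\phi$ directly, whereas for the special choice $\phi=\chi(V)$ the chain rule rewrites it as $\chi'(V)\nabla^i V$. In this sense Lemma \ref{pml} is simply the unspecialized form of (\ref{wmk}), and (\ref{wmk}) is recovered immediately by setting $\phi=\chi(V)$. The one point deserving a line of care is confirming the divergence-free property of $\left(T_{k-1}\right)_i^j(\tilde{h})$ in the present notation, but this is inherited verbatim from the proof of Lemma \ref{mkl}.
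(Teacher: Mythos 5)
Your proposal is correct and matches the paper's own proof: the paper likewise multiplies the pointwise identity (\ref{tkv}) by $\phi$ and integrates by parts, relying on the divergence-free property of $\left(T_{k-1}\right)_i^j(\tilde{h})$ inherited from Lemma \ref{mkl}. Your additional observation that Lemma \ref{wmkl} is the special case $\phi=\chi(V)$ is also accurate.
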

\begin{proof}
	Similarly, by (\ref{tkv}), we arrive at
	\begin{equation*}
		\left(T_{k-1}\right)_i^j(\tilde{h}) \nabla^i \nabla_j V=kC_n^k \left(\left(V-u\right) H_{k-1}(\tilde{\kappa})-u H_k(\tilde{\kappa})\right).
	\end{equation*}
	Multiplying above equation by the function $\phi$ and integrating by parts, one obtains the desired result (\ref{pmk}).
\end{proof}

\begin{proof}[Proof of Theorem \ref{thm+1}]
	(i) It follows from (\ref{pmk}) and $\tilde{\kappa}\in\overline{\Gamma_{k}^+}$ that, for each $i$ and $j$,
	\begin{equation}\label{air}
		\begin{aligned}
			& \int_{\Sigma} a_i (r) \left((V-u)H_{i-1} (\tilde{\kappa}) - uH_i (\tilde{\kappa})\right) \\
			= & -\frac{1}{i C_{n}^i} \int_{\Sigma} \lambda(r) a_i' (r) \left(T_{i-1}\right)_{p}^{q}(\tilde{h})\nabla^{p}r \nabla_q r  \geq 0
		\end{aligned}
	\end{equation}
	and
	\begin{equation}\label{bjrv}
		\begin{aligned}
			& \int_{\Sigma} b_j (r) \left((V-u)H_{j-1} (\tilde{\kappa}) - uH_j (\tilde{\kappa})\right) \\
			= & -\frac{1}{j C_{n}^j} \int_{\Sigma} \lambda(r) b_j' (r) \left(T_{j-1}\right)_{p}^{q}(\tilde{h})\nabla^{p}r \nabla_q r  \leq 0
		\end{aligned}
	\end{equation}
	Summing (\ref{air}) over $i$ and (\ref{bjrv}) over $j$, and then taking the difference gives
	\begin{equation}\label{psnmij1}
		\begin{aligned}
			0 & =\int_{\Sigma} \left(\sum_{j=l}^k b_j (r) H_j (\tilde{\kappa})-\sum_{i=1}^{l-1} a_i (r) H_i (\tilde{\kappa}) \right) u d\mu \\ & \geq \int_{\Sigma}  \left(\sum_{j=l}^k b_j (r) H_{j-1} (\tilde{\kappa})-\sum_{i=1}^{l-1} a_i (r) H_{i-1} (\tilde{\kappa}) \right) (V-u) d\mu.
		\end{aligned}
	\end{equation}
	As in the proof of Theorem \ref{thm1.3}(i), one can obtain the following inequalities:
	\begin{equation}\label{bjainm}
		\sum_{j=l}^k b_j (r) H_{j-1}(\tilde{\kappa}) \geq \sum_{i=1}^{l-1} a_i (r) H_{i-1}(\tilde{\kappa}).
	\end{equation}
	Combining this with (\ref{psnmij1}), we conclude that all integrands in (\ref{psnmij1}) are zero. This implies (\ref{bjainm}) is an equality and hence $\Sigma$ is a geodesic sphere by the Newton-Maclaurin inequality.
	
	(ii) By dividing (\ref{a0r}) by $a_{0} (r)$, it suffices to prove the result in the case where
	\begin{equation}
		1 =\sum_{j=1}^{k}b_j (r) H_j (\tilde{\kappa}).
	\end{equation}
	From (\ref{nm1}) and (\ref{pmk}), we have
	$$
	\begin{aligned}
		 \int_{\Sigma} u d\mu & = \int_{\Sigma} u \left(\sum_{j=1}^{k}b_j (r) H_j (\tilde{\kappa})\right) d\mu \geq \int_{\Sigma} (V-u) \left(\sum_{j=1}^{k}b_j (r) H_{j-1} (\tilde{\kappa})\right) \frac{H_{1}(\tilde{\kappa})}{H_{1}(\tilde{\kappa})} d\mu \\
		& \geq \int_{\Sigma} (V-u) \left(\sum_{j=1}^{k}b_j (r) H_{j} (\tilde{\kappa})\right) \frac{1}{H_{1}(\tilde{\kappa})} d\mu = \int_{\Sigma} \frac{V-u}{H_{1}(\tilde{\kappa})} d\mu \\
		& \geq (n+1)\int_{\Omega} V d\mathrm{vol} = \int_{\Sigma} u d\mu
	\end{aligned}
	$$
	where in the last inequality we used Proposition \ref{hkp}. Therefore, the equality in both case yields that $\Sigma$ is a geodesic sphere.
	
	(iii) Similarly, as in the proof of Theorem \ref{thm1.3}(iii), one can obtain
	\begin{equation}\label{wsnmij111}
		\sum_{j=l}^k b_j (r) H_{j+1}(\tilde{\kappa}) \leq \sum_{i=0}^{l-1} a_i (r) H_{i+1}(\tilde{\kappa}).
	\end{equation}
	We infer from (\ref{pmk}) and $\tilde{\kappa}\in\overline{\Gamma_{k}^+}$ that, for each $i$ and $j$,
	\begin{equation}\label{air1}
		\begin{aligned}
			& \int_{\Sigma} a_i (r) \left((V-u)H_{i} (\tilde{\kappa}) - uH_{i+1} (\tilde{\kappa})\right) \\
			= & -\frac{1}{(i+1) C_{n}^{i+1}} \int_{\Sigma} \lambda(r) a_i' (r) \left(T_{i}\right)_{p}^{q}(\tilde{h})\nabla^{p}r \nabla_q r  \geq 0
		\end{aligned}
	\end{equation}
	and
	\begin{equation}\label{bjrv1}
		\begin{aligned}
			& \int_{\Sigma} b_j (r) \left((V-u)H_{j} (\tilde{\kappa}) - uH_{j+1} (\tilde{\kappa})\right) \\
			= & -\frac{1}{(j+1) C_{n}^{j+1}} \int_{\Sigma} \lambda(r) b_j' (r) \left(T_{j}\right)_{p}^{q}(\tilde{h})\nabla^{p}r \nabla_q r  \leq 0
		\end{aligned}
	\end{equation}
	Summing (\ref{air1}) over $i$ and (\ref{bjrv1}) over $j$, and then taking the difference gives
	\begin{equation}
		\begin{aligned}
			0 & =\int_{\Sigma} \left(\sum_{i=0}^{l-1} a_i (r) H_i (\tilde{\kappa})-\sum_{j=l}^k b_j (r) H_j (\tilde{\kappa}) \right) (V-u) d\mu \\ & \geq \int_{\Sigma}  \left(\sum_{i=0}^{l-1} a_i (r) H_{i+1}(\tilde{\kappa})-\sum_{j=l}^k b_j (r) H_{j+1}(\tilde{\kappa}) \right) u d\mu \geq 0.
		\end{aligned}
	\end{equation}
	where in the last inequality we used (\ref{spf}) and (\ref{wsnmij111}). By the Newton-Maclaurin inequality, it implies that $\Sigma$ is a geodesic sphere.
\end{proof}

\begin{proof}[Proof of Theorem \ref{thm+2}]
	By dividing (\ref{bjhj}) by $\eta (r)$, it suffices to prove the theorem in the case that
	\begin{equation}\label{bjhj1}
		\sum_{j=1}^{k}\bigg(a_j (r) H_j (\tilde{\kappa}) + b_j (r) H_1 (\tilde{\kappa}) H_{j-1} (\tilde{\kappa})\bigg) = 1.
	\end{equation}
	
	By $u\geq0$ and the Newton-Maclaurin inequality (\ref{nm1}), we have
	\begin{equation}\label{int u}
		\begin{aligned}
			\int_{\Sigma} u d\mu & = \int_{\Sigma} u \left(\sum_{j=1}^{k}\left(a_j (r) H_j (\tilde{\kappa})+ b_j (r) H_1 (\tilde{\kappa}) H_{j-1} (\tilde{\kappa})\right)\right) d\mu \\
			&=\int_{\Sigma}\sum_{j=1}^{k}a_j (r) u H_j (\tilde{\kappa}) d\mu + \int_{\Sigma}\sum_{j=1}^{k}b_j (r) u H_1 (\tilde{\kappa}) H_{j-1} (\tilde{\kappa}) d\mu \\
			&\geq \int_{\Sigma}\sum_{j=1}^{k}a_j (r) u H_j (\tilde{\kappa}) d\mu + \int_{\Sigma}\sum_{j=1}^{k}b_j (r) u H_j (\tilde{\kappa}) d\mu.
		\end{aligned}
	\end{equation}
	It follows from (\ref{pmk}) and $\tilde{\kappa}\in\overline{\Gamma_{k}^+}$ that
	\begin{equation}\label{bjr}
		\begin{aligned}
			& \int_{\Sigma} a_j (r) \left((V-u)H_{j-1} (\tilde{\kappa}) - uH_j (\tilde{\kappa})\right) \\
			= & -\frac{1}{j C_{n}^j} \int_{\Sigma} \lambda(r) a_j' (r) \left(T_{j-1}\right)_{p}^{q}(\tilde{h})\nabla^{p}r \nabla_q r  \leq 0.
		\end{aligned}
	\end{equation}
	Similarly,
	\begin{equation}\label{cjr}
		\begin{aligned}
			 &\int_{\Sigma} b_j (r) \left((V-u)H_{j-1} (\tilde{\kappa}) - uH_j (\tilde{\kappa}) \right) \\
			= & -\frac{1}{j C_{n}^j} \int_{\Sigma} \lambda(r) b_j' (r) \left(T_{j-1}\right)_{p}^{q}(\tilde{h})\nabla^{p}r \nabla_q r  \leq 0.
		\end{aligned}
	\end{equation}
	Substituting (\ref{bjr}) and (\ref{cjr}) into (\ref{int u}), using (\ref{nm1}), we have
	\begin{equation*}
		\begin{aligned}
			\int_{\Sigma} u d\mu & \geq \int_{\Sigma}\sum_{j=1}^{k}a_j (r) (V-u) H_{j-1} (\tilde{\kappa}) d\mu + \int_{\Sigma}\sum_{j=1}^{k}b_j (r) (V-u) H_{j-1} (\tilde{\kappa}) d\mu \\
			&\geq \int_{\Sigma}\sum_{j=1}^{k}a_j (r) (V-u) \frac{H_{j} (\tilde{\kappa})}{H_{1} (\tilde{\kappa})} d\mu + \int_{\Sigma}\sum_{j=1}^{k}b_j (r) (V-u) H_{j-1} (\tilde{\kappa}) d\mu \\
			&= \int_{\Sigma} \frac{V-u}{H_1 (\tilde{\kappa})}\left(\sum_{j=1}^{k}(a_j (r)H_{j} (\tilde{\kappa})+b_j (r)H_1 (\tilde{\kappa})H_{j-1} (\tilde{\kappa}))\right) d\mu = \int_{\Sigma} \frac{V-u}{H_1 (\tilde{\kappa})} d\mu.
		\end{aligned}
	\end{equation*}
	
	On the other hand, Hu-Wei-Zhou's inequality (Proposition \ref{hkp}) is the reverse inequality
	$$
	\int_{\Sigma} u d\mu \leq \int_{\Sigma} \frac{V-u}{H_1 (\tilde{\kappa})} d\mu.
	$$
	These two inequalities yield the equality in Hu-Wei-Zhou's inequality. We conclude that $\Sigma$ is a geodesic sphere.
\end{proof}
\begin{proof}[Proof of Theorem \ref{thm+3}]
	The assumption $\tilde{\kappa}\in\Gamma_{k}^+$ says $H_j (\tilde{\kappa})>0$ for all $j=0,\cdots,k$. It follows from (\ref{aij}) that $u>0$.
	
	Assume first that $k \geq 2$. By Newton-Maclaurin inequality (\ref{nm1}), we have for $0 \leq i < j \leq k$,
	\begin{equation}\label{gnm}
		\left(\frac{1}{H_1 (\tilde{\kappa})}\right)^{j-i} \leq \frac{H_i (\tilde{\kappa})}{H_j (\tilde{\kappa})}=\prod_{m=i}^{j-1}\frac{H_m (\tilde{\kappa})}{H_{m+1} (\tilde{\kappa})} \leq \left(\frac{H_{j-1} (\tilde{\kappa})}{H_j (\tilde{\kappa})}\right)^{j-i}.
	\end{equation}
	Therefore, by Newton-Maclaurin inequality (\ref{nm1}) again,
	\begin{equation}\label{u/v-u1}
		\beta \frac{u}{V-u}=\sum_{i < j} a_{i,j}\left(\frac{H_{i}(\tilde{\kappa})}{H_{j}(\tilde{\kappa})}\right)^{\frac{1}{j-i}} \leq \sum_{i < j} a_{i,j}\frac{H_{j-1} (\tilde{\kappa})}{H_j (\tilde{\kappa})}\leq \sum_{i < j} a_{i,j}\frac{H_{k-1} (\tilde{\kappa})}{H_k (\tilde{\kappa})}=\frac{H_{k-1} (\tilde{\kappa})}{H_k (\tilde{\kappa})},
	\end{equation}
	and
	\begin{equation}\label{u/v-u2}
		\beta \frac{u}{V-u}=\sum_{i < j} a_{i,j}\left(\frac{H_{i}(\tilde{\kappa})}{H_{j}(\tilde{\kappa})}\right)^{\frac{1}{j-i}} \geq \sum_{i < j} a_{i,j}\frac{1}{H_1 (\tilde{\kappa})}=\frac{1}{H_1 (\tilde{\kappa})}.
	\end{equation}
	The inequality (\ref{u/v-u1}) gives
	\begin{equation*}
		\beta \int_{\Sigma}uH_k (\tilde{\kappa})d\mu \leq \int_{\Sigma}(V-u)H_{k-1} (\tilde{\kappa})d\mu,
	\end{equation*}
	which in turn implies $\beta \leq 1$ by (\ref{mkf}).
	
	On the other hand, (\ref{u/v-u2}) yields
	\begin{equation*}
		\beta \int_{\Sigma}uH_1 (\tilde{\kappa})d\mu \geq \int_{\Sigma}(V-u)d\mu,
	\end{equation*}
	and thus $\beta \geq 1$ again by (\ref{mkf}).
	
	We conclude that $\beta = 1$ and all the inequalities in (\ref{gnm}) are all equalities. Therefore $\Sigma$ is umbilical and thus is a geodesic sphere.
	
	When $k=1$, (\ref{u/v-u2}) becomes an equality and hence $\beta = 1$ by (\ref{mkf}). By the Newton-Maclaurin inequality, we get
	\begin{equation}\label{gnm2}
		H_2 (\tilde{\kappa})\frac{u}{V-u}=\frac{H_{2}(\tilde{\kappa})}{H_{1}(\tilde{\kappa})} \leq \frac{H_{1}(\tilde{\kappa})}{H_{0}(\tilde{\kappa})}=H_{1}(\tilde{\kappa}).
	\end{equation}
	Integrating this inequality and comparing to the Minkowski type formula (\ref{mkf}) for $k=2$, we again infer that (\ref{gnm2}) is an equality and hence $\Sigma$ is a geodesic sphere.
\end{proof}

\section{Proof of Corollaries \ref{coro1} and \ref{coro2}}\label{sec5}
As in the proof of Lemma 2.4 in \cite{BCW21}, we first derive the formula for $H_{k}(\tilde{\kappa})$ in terms of $H_{i}(\kappa)$, $i=0,\cdots,k$.
\begin{lemma}\label{hkl}
	For a hypersurface $(\Sigma, g)$ in $\mathbb{H}^{n+1}$, its shifted mean curvature $H_{k}(\tilde{\kappa})$ of the induced metric $g$ can be expressed as follows
	\begin{equation}
		H_{k}(\tilde{\kappa}) = \sum_{i=0}^{k}(-1)^{k-i}\binom{k}{i}H_{i}(\kappa),
	\end{equation}
	where $\tilde{\kappa}=(\tilde{\kappa}_{1}, \cdots, \tilde{\kappa}_{n})=(\kappa_{1}-1,\cdots, \kappa_{n}-1)$ are the shifted principal curvatures of $\Sigma$.
\end{lemma}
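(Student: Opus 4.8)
The statement to be proved is a purely algebraic identity relating the elementary symmetric functions of the principal curvatures $\kappa=(\kappa_1,\dots,\kappa_n)$ and those of the shifted curvatures $\tilde\kappa=(\kappa_1-1,\dots,\kappa_n-1)$; the geometry enters only through the relation $\tilde\kappa_i=\kappa_i-1$. Since $H_k=\sigma_k/\binom{n}{k}$, the plan is first to establish a general translation formula for the unnormalized symmetric functions, namely
\begin{equation*}
\sigma_k(\lambda_1+t,\dots,\lambda_n+t)=\sum_{i=0}^{k}\binom{n-i}{k-i}\,t^{\,k-i}\,\sigma_i(\lambda),
\end{equation*}
valid for every real $t$, and then to specialize to $t=-1$ and renormalize.

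The cleanest route I would take to the translation formula is through the generating function
\begin{equation*}
\prod_{j=1}^{n}\bigl(1+x\lambda_j\bigr)=\sum_{k=0}^{n}\sigma_k(\lambda)\,x^k.
\end{equation*}
Replacing each $\lambda_j$ by $\lambda_j+t$ and rewriting the factor as $1+x(\lambda_j+t)=(1+xt)+x\lambda_j$, I would pull out the common factor $(1+xt)$ to obtain
\begin{equation*}
\prod_{j=1}^{n}\bigl((1+xt)+x\lambda_j\bigr)=\sum_{i=0}^{n}\sigma_i(\lambda)\,x^i\,(1+xt)^{\,n-i}.
\end{equation*}
Expanding $(1+xt)^{n-i}$ by the binomial theorem and comparing the coefficient of $x^k$ on the two sides then yields precisely the translation formula displayed above.

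Setting $t=-1$ gives $\sigma_k(\tilde\kappa)=\sum_{i=0}^{k}(-1)^{k-i}\binom{n-i}{k-i}\sigma_i(\kappa)$, and the final step is to pass back to the normalized functions. Substituting $\sigma_i(\kappa)=\binom{n}{i}H_i(\kappa)$ and $\sigma_k(\tilde\kappa)=\binom{n}{k}H_k(\tilde\kappa)$, and then simplifying the product of binomial coefficients by the elementary identity $\binom{n-i}{k-i}\binom{n}{i}=\binom{n}{k}\binom{k}{i}$, I would divide through by $\binom{n}{k}$ to arrive at the asserted formula $H_k(\tilde\kappa)=\sum_{i=0}^{k}(-1)^{k-i}\binom{k}{i}H_i(\kappa)$. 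There is no genuine obstacle in this argument: it is entirely combinatorial, and the only points demanding mild care are the re-indexing of the double sum after the binomial expansion and the verification of the binomial simplification. As an alternative, one could instead prove the translation formula by induction on $k$ via the Newton-type recursion for $\sigma_k$, but the generating-function approach avoids carrying an induction hypothesis and is the one I would follow.
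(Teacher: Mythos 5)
Your proof is correct and follows essentially the same route as the paper: both compare coefficients in the generating polynomial for the elementary symmetric functions under the shift $\kappa_i \mapsto \kappa_i - 1$ (your $\prod_j(1+x\lambda_j)$ is just the paper's $\prod_i(t+\tilde\kappa_i)$ after the substitution $x=1/t$), and both finish with the identity $\binom{n-i}{k-i}\binom{n}{i}=\binom{n}{k}\binom{k}{i}$ to pass to the normalized curvatures. The only cosmetic difference is that you state the translation formula for a general shift $t$ and then set $t=-1$, while the paper substitutes the shift directly.
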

\begin{proof}
	By the definition of the elementary symmetric polynomials, we have
	$$
	\prod_{i=1}^{n} (t+\tilde{\kappa}_{i}) = \sum_{k=0}^{n} \sigma_{k}(\tilde{\kappa})t^{n-k}.
	$$
	On the other hand,
	$$
	\begin{aligned}
		\prod_{i=1}^{n} (t+\tilde{\kappa}_{i}) & = \prod_{i=1}^{n} (t-1+\kappa_{i})=\sum_{l=0}^{n} \sigma_{l}(\kappa)(t-1)^{n-l} \\
		& = \sum_{l=0}^{n} \sigma_{l}(\kappa) \sum_{i=0}^{n-l}\binom{n-l}{i}t^{i} (-1)^{n-l-i} \\
		& = \sum_{k=0}^{n}\left(\sum_{i=0}^{k}\binom{n-i}{k-i}(-1)^{k-i}\sigma_{i}(\kappa)\right)t^{n-k}.
	\end{aligned}
	$$
	Comparing the coefficients of $t^{n-k}$, we have
	$$
	\begin{aligned}
		\sigma_{k}(\tilde{\kappa}) & = \sum_{i=0}^{k}\binom{n-i}{k-i}(-1)^{k-i}\sigma_{i}(\kappa) = \sum_{i=0}^{k}\binom{n-i}{k-i}(-1)^{k-i}\binom{n}{i}H_{i}(\kappa) \\
		& =\binom{n}{k}\sum_{i=0}^{k}(-1)^{k-i}\binom{k}{i}H_{i}(\kappa).
	\end{aligned}
	$$
	Thus
	$$
	H_{k}(\tilde{\kappa}) = \binom{n}{k}^{-1}\sigma_{k}(\tilde{\kappa})= \sum_{i=0}^{k}(-1)^{k-i}\binom{k}{i}H_{i}(\kappa).
	$$
\end{proof}
Next, we show that the Gauss-Bonnet curvature $L_{k}$ can be expressed as a linear combination of the shifted $m$th mean curvatures with $m$ ranging from $k$ to $2k$.
\begin{lemma}\label{lkl}
	For a hypersurface $(\Sigma, g)$ in $\mathbb{H}^{n+1}$, its Gauss-Bonnet curvature $L_k$ of the induced metric $g$ can be expressed as follows
	\begin{equation}
		L_{k} = \binom{n}{2k} (2k)! \sum_{j=0}^{k}2^{j}\binom{k}{j}H_{2k-j}(\tilde{\kappa}).
	\end{equation}
\end{lemma}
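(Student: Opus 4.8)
The plan is to substitute the Gauss equation into the definition \eqref{lk} of $L_k$ and then reorganize the resulting contraction using nothing but the total antisymmetry of the generalized Kronecker delta. First I would record the Gauss equation for $\Sigma\subset\mathbb{H}^{n+1}$. Since the ambient space has constant sectional curvature $-1$, its curvature tensor is $\bar R_{ij}{}^{kl}=-(\delta_i^k\delta_j^l-\delta_i^l\delta_j^k)$, so the intrinsic curvature of the induced metric satisfies
\[
R_{ij}{}^{kl}=h_i^kh_j^l-h_i^lh_j^k-(\delta_i^k\delta_j^l-\delta_i^l\delta_j^k).
\]
The decisive preliminary step is to rewrite this in terms of the shifted Weingarten matrix $\tilde h_i^j=h_i^j-\delta_i^j$. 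Substituting $h_i^j=\tilde h_i^j+\delta_i^j$ and expanding, the purely Kronecker piece $\delta_i^k\delta_j^l-\delta_i^l\delta_j^k$ produced by $h_i^kh_j^l-h_i^lh_j^k$ cancels exactly against the ambient curvature term, leaving the clean identity
\[
R_{ij}{}^{kl}=\big(\tilde h_i^k\tilde h_j^l-\tilde h_i^l\tilde h_j^k\big)+\big(\delta_i^k\tilde h_j^l-\delta_i^l\tilde h_j^k+\tilde h_i^k\delta_j^l-\tilde h_i^l\delta_j^k\big).
\]
In words: relative to $\tilde h$ the hypersurface is curvature-wise Euclidean up to the displayed linear correction.

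Next I would insert this into $L_k=\tfrac{1}{2^k}\delta^{i_1\cdots i_{2k}}_{j_1\cdots j_{2k}}\prod_{a=1}^kR_{i_{2a-1}i_{2a}}{}^{j_{2a-1}j_{2a}}$ and expand the product, grouping the $R$-factors according to the number $j$ of them that contribute the linear part, the remaining $m=k-j$ contributing the quadratic part. Because $\delta^{i_1\cdots i_{2k}}_{j_1\cdots j_{2k}}$ is totally antisymmetric separately in its upper and in its lower indices, each factor may be replaced by a single representative monomial, the antisymmetric partners contributing equally: a quadratic factor becomes $2\,\tilde h_{i_{2a-1}}^{j_{2a-1}}\tilde h_{i_{2a}}^{j_{2a}}$ and a linear factor becomes $4\,\delta_{i_{2a-1}}^{j_{2a-1}}\tilde h_{i_{2a}}^{j_{2a}}$ (a short index-relabelling shows each of the four linear terms yields the same contraction). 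Choosing which $j$ factors are linear gives $\binom kj$, and the numerical prefactors multiply to $2^m4^j=2^{k+j}$.

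Finally I would carry out the reduced contraction. After the replacement there are $2k-j$ copies of $\tilde h$ and $j$ Kronecker factors paired against $\delta^I_J$. Contracting the $j$ Kronecker factors one at a time, each lowers the order of the generalized delta by one and pulls out a factor, producing $\prod_{t=1}^{j}(n-2k+t)=\tfrac{(n-2k+j)!}{(n-2k)!}$, while the leftover contraction of the order-$(2k-j)$ delta against $2k-j$ copies of $\tilde h$ equals $(2k-j)!\,\sigma_{2k-j}(\tilde h)$ by \eqref{sgmk}. Assembling these with the overall $\tfrac{1}{2^k}$, using $\tfrac{1}{2^k}2^{k+j}=2^j$, and rewriting $\sigma_{2k-j}(\tilde h)=\binom{n}{2k-j}H_{2k-j}(\tilde\kappa)$, the dimension-dependent factorials collapse via $(2k-j)!\binom{n}{2k-j}\tfrac{(n-2k+j)!}{(n-2k)!}=\tfrac{n!}{(n-2k)!}=\binom{n}{2k}(2k)!$, giving the claimed identity. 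I expect the main obstacle to be precisely this middle bookkeeping — rigorously justifying the representative replacement and tracking the powers of $2$ together with the Kronecker contractions; as a consistency check, the case $k=1$ (where $L_1$ is the scalar curvature and the formula reads $L_1=n(n-1)\big[H_2(\tilde\kappa)+2H_1(\tilde\kappa)\big]$) can be verified directly from the identity $R_{ij}{}^{ij}=2\sigma_2(\tilde h)+2(n-1)\sigma_1(\tilde h)$.
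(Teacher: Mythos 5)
Your proposal is correct and follows essentially the same route as the paper's proof: substituting the Gauss equation written in terms of $\tilde h_i^j=h_i^j-\delta_i^j$ into the definition \eqref{lk}, using the antisymmetry of the generalized Kronecker delta to reduce each factor to a representative monomial (absorbing the $\tfrac{1}{2^k}$ so each factor reads $\tilde h\tilde h+2\,\tilde h\delta$), expanding binomially, and evaluating the contraction via $\delta_{j_1\cdots j_p}^{i_1\cdots i_p}\delta_{i_1}^{j_1}=(n+1-p)\delta_{j_2\cdots j_p}^{i_2\cdots i_p}$ together with \eqref{sgmk}. Your bookkeeping of the powers of $2$, the falling factorials, and the collapse to $\binom{n}{2k}(2k)!$ matches the paper's computation exactly, and your $k=1$ consistency check is a correct bonus.
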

\begin{proof}
	The proof of this lemma can be found in \cite[Lemma 9.1]{HLW22}. For convenience of the readers, we include the proof here. Firstly, we recall the Gauss formula for hypersurfaces in hyperbolic space:
	\begin{equation}\label{rijsl}
		\begin{aligned}
			R_{i j}{ }^{s l} & =\left(h_i^s h_j^l-h_i^l h_j^s\right)-\left(\delta_i^s \delta_j^l-\delta_i^l \delta_j^s\right) \\
			&=\left(\tilde{h}_i^s+\delta_i^s\right)\left(\tilde{h}_j^l+\delta_j^l\right)-\left(\tilde{h}_i^l+\delta_i^l\right)\left(\tilde{h}_j^s+\delta_j^s\right)-\left(\delta_i^s \delta_j^l-\delta_i^l \delta_j^s\right) \\
			& =\left(\tilde{h}_i^s \tilde{h}_j^l+\delta_i^s \tilde{h}_j^l+\tilde{h}_i^s \delta_j^l\right)-\left(\tilde{h}_i^l \tilde{h}_j^s+\delta_i^l \tilde{h}_j^s+\tilde{h}_i^l \delta_j^s\right),
		\end{aligned}
	\end{equation}
	where $\tilde{h}_i^j=h_i^j-\delta_i^j$. Substituting (\ref{rijsl}) into the definition (\ref{lk}) of $L_k$ and noting (\ref{sgmk}), we have
	$$
	\begin{aligned}
		L_k & =\delta_{j_1 j_2 \cdots j_{2 k-1} j_{2 k}}^{i_1 i_2 \cdots i_{2 k-1} i_{2 k}}\left(\tilde{h}_{i_1}^{j_1} \tilde{h}_{i_2}^{j_2}+2 \tilde{h}_{i_1}^{j_1} \delta_{i_2}^{j_2}\right) \cdots\left(\tilde{h}_{i_{2 k-1}}^{j_{2 k-1}} \tilde{h}_{i_{2 k}}^{j_{2 k}}+2 \tilde{h}_{i_{2 k-1}}^{j_{2 k-1}} \delta_{i_{2 k}}^{j_{2 k}}\right) \\
		& =\sum_{m=0}^k \binom{k}{m}2^m \delta_{j_1 j_2 \cdots j_{2 k}}^{i_1 i_2 \cdots i_{2 k}} \delta_{i_2}^{j_2} \delta_{i_4}^{j_4} \cdots \delta_{i_{2 m}}^{j_{2 m}} \tilde{h}_{i_1}^{j_1} \tilde{h}_{i_3}^{j_3} \cdots \tilde{h}_{i_{2 m-1}}^{j_{2 m-1}} \tilde{h}_{i_{2 m+1}}^{j_{2 m+1}} \tilde{h}_{i_{2 m+2}}^{j_{2 m+2}} \cdots \tilde{h}_{i_{2 k-1}}^{j_{2 k-1}} \tilde{h}_{i_{2 k}}^{j_{2 k}} \\
		& =\sum_{m=0}^k\binom{k}{m} 2^m(n+1-2 k) \cdots(n+1-2 k+m-1)(2 k-m) !\binom{n}{2k-m} H_{2 k-m}(\tilde{\kappa}) \\
		& =\binom{n}{2k}(2 k) ! \sum_{m=0}^k\binom{k}{m} 2^m H_{2 k-m}(\tilde{\kappa}) .
	\end{aligned}
	$$
	Here in the first equality we used the symmetry of the generalized Kronecker delta, and in the third equality we used the basic property of the generalized Kronecker delta
	$$
	\delta_{j_1 j_2 \cdots j_{p-1} j_p}^{i_1 i_2 \cdots i_{p-1} i_p} \delta_{i_1}^{j_1}=(n+1-p) \delta_{j_2 j_3 \cdots j_p}^{i_2 i_3 \cdots i_p} .
	$$
\end{proof}
From above Lemmas and Theorem \ref{thm+1}(ii) with constant coefficients, the proof of Corollaries \ref{coro1} and \ref{coro2} are apparent.
\vspace{5mm}

{\bf Acknowledgements.}
Sheng was partially supported by National Key R$\&$D Program of China(No. 2022YFA1005500) and  Natural Science Foundation of China under Grant No. 12031017 and No. 11971424. Wu was partially supported by National Key R$\&$D Program of China(No. 2022YFA1005501) and  Natural Science Foundation of China under Grant No. 11731001.

\end{document}